\def\@biblabel#1{#1.}
\newcommand{\N}{{\mathbb N}}
\newcommand{\R}{{\mathbb R}}
\newcommand{\cyr}{%
  \renewcommand\rmdefault{wncyr}%
  \renewcommand\sfdefault{wncyss}%
  \renewcommand\encodingdefault{OT2}%
  \normalfont\selectfont}
\DeclareTextFontCommand{\textcyr}{\cyr}
\newcommand{\Cfvar}[1]{\text{C}^{#1}}
\newcommand{\Cfone}]{\ifmmode\Cfvar{1}\fi}
\newcommand{\Cftwo}]{\ifmmode\Cfvar{2}\fi}
\newcommand{\Cn}{\mathcal{C}(\R^n)}
\newcommand{\Dn}{\mathcal{D}(\R^n)}
\newcommand{\Dnminus}{\mathcal{D}(\R^{n-1})}
\newcommand{\Embed}[2]{J_{#1}(#2)}
\newcommand{\JacMat}[2]{J{#1}(#2)}
\newcommand{\Prj}{\Pi}
\newcommand{\QuLowerSubD}[2]{\underline{\partial} #1(#2)}
\newcommand{\QuUpperSubD}[2]{\overline{\partial} #1(#2)}
\newcommand{\Sn}{S_{n-1}}
\newcommand{\Snvar}[1]{S_{#1-1}}
\newcommand{\Snminus}{S_{n-2}}
\newcommand{\Stwo}{S_1}
\newcommand{\SubD}[2]{\partial #1(#2)}
\newcommand{\dirDeriv}[3]{#1'(#2;#3)}
\newcommand{\direct}[1]{\overrightarrow{#1}}
\newcommand{\hfre}{\hfill\mbox{}}
\newcommand{\oldminus}{\ominus}%{-}
\newcommand{\supf}[2]{\delta^*(#1,#2)}
\newcommand{\trans}{\top}
\def\transp{^{\rm T}}
\def\co{\mathop{\rm conv}}
\def\cl{\mathop{{\rm cl}}}
\renewcommand\int{\mathop{{\rm int}}}
\renewcommand{\a}{\alpha}
\renewcommand{\b}{\beta}
\renewcommand{\d}{\delta}
\renewcommand{\phi}{\varphi}
\newcommand{\myspan}{{\rm span\,}}
\newcommand{\dd}{{\overrightarrow \partial}}
\newcommand{\dc}{{\partial}} %%Convex subdifferential
\newcommand{\SphOne}{{{\cal S}_1}} %% One-dimensional sphere, i.e. { x\in R2: ||x||=1}
\newcommand{\Sphn}{{{\cal S}_{n-1}}} %% n-1-dimensional sphere, i.e. { x\in Rn: ||x||=1}
\newcommand{\Supp}{Y}
\newcommand{\SuppEl}{y}
\newcommand{\ch}{}
\newcommand{\chR}{}
\newtheorem{theorem}{Theorem}[section]
\newtheorem{corollary}{Corollary}[section]
\newtheorem{definition}{Definition}[section]
\newtheorem{exam}{Example}[section]
\newtheorem{lemma}{Lemma}[section]
\newtheorem{prop}{Proposition}[section]
\newtheorem{remark}{Remark}[section]
\newcounter{Examplecount}
\title{Directed Subdifferentiable Functions \protect\newline
       and the Directed Subdifferential \protect\newline
       \ch{without Delta-Convex} Structure}
\author{
Robert Baier%
\thanks{Chair of Applied Mathematics, University of Bayreuth,
                    95440 Bayreuth, Germany, robert.baier@uni-bayreuth.de}\;,
Elza Farkhi%
\thanks{School of Mathematical Sciences, Sackler Faculty of
                    Exact Sciences, Tel Aviv University, 69978 Tel Aviv, Israel, elza@post.tau.ac.il}\;
and Vera Roshchina\thanks{Collaborative Research Network, University of Ballarat, F Building,
                    Mount Helen Campus, PO BOX 663, Ballarat VIC 3353, Australia, vroshchina@ballarat.edu.au (Corresponding author)}
}
\begin{document}

\maketitle

\begin{abstract}

We show that the directed subdifferential introduced
%%% !!! priority of words before acronym
for differences of convex (\chR{delta-convex, DC}) functions by  Baier and Farkhi
%(AMS Contemp.Math. 513, 2010)
can be constructed from the directional
derivative without using any information on the delta-convex structure of the
function. The new definition extends to a more general class of functions, which
includes Lipschitz functions definable on o-minimal structure and quasidifferentiable functions.
\end{abstract}

\noindent {\bf \chR{Key Words}: }
   nonconvex subdifferentials,
   directional derivatives,
   difference of convex (\chR{delta-convex, DC}) functions, differences of sets

\noindent {\bf MSC: } 49J52, %%% optimization: nonsmooth analysis
                      90C26, %%% operations research, mathem. programming:
                             %%% nonconvex programming, global optimization
                      26B25, %%% real functions: convexity, generalizations
                      58C20 %%% global analysis, analysis on manifolds: differentiation theory (Gateaux, Fr\'{e}chet, etc.)

\section{Introduction}
\label{SecIntro}
The directed subdifferential was originally introduced in \cite{BaierFarkhiDC} for DC (delta-convex, i.e.~difference of convex) functions as the difference of the Moreau-Rockafellar subdifferentials of two convex functions embedded in the space of directed sets \cite{BaiFar2001,BaiFar2001b}, 
  %. Furthermore, as the directional derivative of  any quasidifferentiable (QD) function is the difference of the support functions of two convex compacts, the directed subdifferential 
and later was extended in \cite{BaiFarRosh2012a, BaiFarRosh2012b}
to the class of quasidifferentiable (QD) functions.

The directed subdifferential enjoys exact calculus rules, and gives the same sharp optimality conditions as the Demyanov-Rubinov quasidifferential or the
Dini subdifferential and superdifferential. However, in contrast to the quasidifferential, %which is an equivalence class of pairs of convex compact sets, the directed subdifferential , as a uniquely defined difference, does not depend on a chosen representating pair, 
the directed subdifferential is uniquely defined.
%, which eliminates the `inflation' in size when calculus rules are applied repeatedly. 
Moreover, one of the major challenges in quasidifferential calculus associated with the DC decomposition is not relevant for the directed
subdifferential: in this work we show that the directed subdifferential
can be constructed without any knowledge of the underlying DC structure.
In fact, the function (or its directional derivative) does not need to be DC. 

We extend the domain of definition of the directed subdifferential to a
large class of functions on $\R^n$, essentially wider than QD functions.
This class %of functions 
includes all locally Lipschitz functions definable on
o-minimal structure and tame functions (see \cite{Coste,DaniilidisPang,Iof2008}).
To our best knowledge, whether or not Lipschitz definable functions are DC constitutes
an open problem.

%However, our main achievement is that the new construction does not require any
%DC structure of the function or its directional derivative.
%What is only needed to calculate the directed subdifferential are (recursively)
%the directional derivatives of the function and of their restrictions to
%lower-dimensional affine sets.
%The directed subdifferential can be constructed for any $n$ times uniformly directionally differentiable function, %which we call `directed subdifferentiable'. The directed subdifferential is defined inductively in the dimension %$n$, analogously to the inductive definition of directed sets.

Let us note that other widely used subdifferentials can be evaluated via the directed
  subdifferential: Dini/Fr\'echet, Michel-Penot, and, in some cases also, Clarke and Mordukhovich subdifferentials (see \cite{BaiFarRosh2010}).
This together with the exact calculus rules for the directed subdifferential may help
 in practical applications where computing nonconvex subdifferentials is needed.

The paper is organized as follows: in Section~\ref{SecPreliminaries}, we remind
the basic ideas behind the Banach space of the directed sets, and,
in Section~\ref{SecDirSubdDC}, we briefly introduce the directed subdifferential,
if a DC structure for the
function or its directional derivative is known.
To give the reader the flavour of our results and to
explain our motivation, in Section~\ref{SecConstrForDC} we demonstrate
how to construct the directed subdifferential for DC functions without knowing
the DC decomposition. Section~\ref{SecDirSubdiff} is devoted to
`directed subdifferentiable' functions and a rigorous definition of the directed
 subdifferential for such functions is given there, as well as caculus rules and one example
 of a directed differentiable function which is not quasidifferentiable.

\section{Directed Sets -- a Brief Overview}\label{SecPreliminaries}

Here we only sketch the most important notions related to directed subdifferential. For more details and examples we refer the reader to \cite{BaiFar2001 , BaiFar2001b, BaierFarkhiDC, BaiFarRosh2012a, BaiFarRosh2012b}.

We use the standard notation in the paper: by $\|\cdot \|$ we denote the Euclidean norm, and $\Sn$ is the $(n-1)$-dimensional sphere in $\R^n$.

The directed sets  and the \emph{embedding} $J_n$
of convex compact sets in  $\R^n$ into the Banach space of directed sets
are introduced in \cite{BaiFar2001,BaiFar2001b} recursively in the % space
dimension $n$. The definition of the directed set is motivated by the Minkowski duality between compact convex sets and their support functions which correspond to the first and the second component in the definition.
Let $C \subset \R^n$ be nonempty, convex and compact and let $l \in \R^n$.
The \emph{support function} and respectively the \emph{supporting face} of $C$
in direction $l$ are defined by
\begin{eqnarray*}
   \supf{l}{C} & \ch{:=} & \max_{c \in C} \,\langle l,c\rangle\;, \\
   \Supp(l,C) & \ch{:=} &
       \{ y \in C \,|\, \langle l,y\rangle = \supf{l}{C}
       \}
   = \arg\max_{c \in C}\,\langle l,c\rangle\;.
\end{eqnarray*}
Note that $\Supp(\cdot,C)$ is upper semicontinuous and for $l=0$, \ $\Supp(l,C)=C$.
By $\SuppEl(l,C)$ we denote any point of the set $\Supp(l,C)$, and if the latter is a singleton (i.e.~there is a unique supporting point), then $\Supp(l,C)=\{\SuppEl(l,C)\}$.

In one dimension, the \emph{directed embedded intervals} are defined by the values of the support function in the two unit directions $\pm 1$.
Given an interval $[a,b]\in \R$, its embedded image in the space of
one dimensional directed space is
$$
  \overrightarrow{[a,b]}=
  J_1([a,b]) \chR{:=}
  (\delta^*(\eta,[a,b]))_{\eta = \pm 1}=(-a,b) \quad (a \leq b)\;.
$$

A general \emph{directed interval} $\overrightarrow{A}_1 = \overrightarrow{[c,d]} = (-c,d)$ allows
that $c,d$ are arbitrary real numbers, even $c > d$ is possible (see references in
\cite{BaiFar2001,BaiFar2001b}).

Linear operations on directed intervals are defined in a natural way and are motivated by the operations for vectors in $\R^2$: for directed intervals $\overrightarrow{[a,b]}$ and $\overrightarrow{[c,d]}$ and any $\a,\b\in \R$
\begin{equation}\label{eq:LinOpDirInt}
\alpha \overrightarrow{[a,b]}+\beta \overrightarrow{[c,d]} \ch{:=} \overrightarrow{[\alpha a+\beta c, \alpha b+\beta d]} \;.
\end{equation}

Even though the definition of the space of directed sets is rather abstract, it is motivated by the extension of the convex geometry: by embedding convex sets into the space of directed sets, it is possible to define a richer class of algebraic operations, including subtraction, which is exact and can be `inverted' (in contrast with the standard differences of convex sets, when the relation $(A-B)+B= A $ does not hold in general). We first give the (abstract) definition of the directed sets, and then give the geometrical motivation, discuss the embedding of the convex sets and the relevant operations on the directed sets.

\begin{definition}
   We call $\direct{A}$  a {\em directed set}
    \begin{itemize}
       \item[(i)]  in $\R$, iff it is a directed interval $\direct{A} \chR{:=} \direct{[-a_1(-1),a_1(1)]} =(a_1(-1), a_1(1))$. Its norm is
                    $\| \direct{A} \|_1 \chR{:=}
                   \max\limits_{ l = \pm 1 } | a_1(l) |$.
       \item[(ii)] in $\R^n,$ $n \geq 2$, iff there exist
                   a continuous function $a_n: \Sn \rightarrow \R$
                   and a uniformly bounded map having
                   lower-dimensional directed sets as images
                   $\direct{A_{n-1}}: \Sn \rightarrow
                   \Dnminus$ with respect to
                   $\| \cdot \|_{n-1}$.
    \end{itemize}
   We denote $\direct{A} \chR{:=} (\direct{A_{n-1}(l)}, a_n(l))_{l \in \Sn}$ and define its norm as
    $$
       \| \direct{A} \| :=
       \| \direct{A} \|_n := \max\{ \sup\limits_{ l \in \Sn }
                                    \| \direct{A_{n-1}(l)} \|_{n-1},
                                    \max\limits_{ l \in \Sn } | a_n(l) | \} \;.
    $$
   The set of all directed sets in $\R^n$ is denoted by $\Dn$.
\end{definition}
Observe that the space of directed sets is a Banach 
space (see \cite[Theorem~4.15]{BaiFar2001}).
%
%%% !!! slight reformulation to avoid overfull \hbox in next line:
%%% !!! The \emph{linear operations} are defined recursively on the two components of the
%%% !!! directed sets $\direct{A} = (\direct{A_{n-1}(l)}, a_n(l))_{l \in \Sn}$,
%%% !!! $\direct{B} = (\direct{B_{n-1}(l)}, b_n(l))_{l \in \Sn}$:
The \emph{linear operations} are defined recursively \chR{for the 
directed sets} $\direct{A} = (\direct{A_{n-1}(l)}, a_n(l))_{l \in \Sn}$,
$\direct{B} = (\direct{B_{n-1}(l)}, b_n(l))_{l \in \Sn}$
\chR{on each component}:
      \begin{equation}
       \begin{array}{|r@{\,}c@{\,}l|}
                  \hline \rule{0pt}{3ex}
          \direct{A} + \direct{B} & := & (\direct{A_{n-1}(l)} +
             \direct{B_{n-1}(l)}, a_n(l) + b_n(l))_{l \in \Sn} \;, \\[0.5ex]
          \lambda \cdot \direct{A} & := & (\lambda \cdot
             \direct{A_{n-1}(l)}, \lambda \cdot a_n(l))_{l \in \Sn}
             \hfre (\lambda \in \R) \;, \\[0.5ex]
          \direct{A} - \direct{B} & := & \direct{A} + (-\direct{B})
             = (\direct{A_{n-1}(l)} - \direct{B_{n-1}(l)},
                a_n(l) - b_n(l))_{l \in \Sn} \;. \\[0.5ex]
          \hline
       \end{array}
       \label{Eq_Operat_Dir_sets}
      \end{equation}

The space of directed sets is hence a normed vector space with zero\\ $\overrightarrow 0 \chR{:=} \overrightarrow{0_{\Dn}}\chR{:=}(0_{\Dn},0)_{l\in \Sn}$ (see \cite[Proposition~4.14]{BaiFar2001}).

The \textit{embedding} $J_n: \Cn \rightarrow \Dn$ of the convex sets in the space of directed sets which determines for every
set $A \in \Cn$ its embedded image $\direct{A} \in \Dn$
is defined as follows:
 \begin{itemize}
    \item[(i)]  For $n = 1$, \ $J_1([a,b]) := \chR{\direct{[a,b]} :=} (-a,b)$ \chR{for $a \leq b$}.
    \item[(ii)] For $n \ge 2$, \ $J_n(A) := \chR{\direct{A}:=} \big(J_{n-1}(\Prj_{n-1,l}(Y(l,A))),\delta^*(l,A)\big)_{l \in \Sn}$, where
                \rule{0ex}{2.5ex}
                $J_{n-1}(\Prj_{n-1,l}(Y(l,A)))$
                is an $n-1$-dimensional image of the face $Y(l,A)$,
                $\Prj_{n-1,l}(x) := \pi_{n-1,n} R_{n,l}(x)$ and  $\pi_{n-1,n} \in
                        \R^{(n-1) \times n}$ is the natural projection,
                i.e.~$\pi_{n-1,n}(x_1,\ldots,x_{n-1},x_n) \chR{:=} (x_1,\ldots,x_{n-1})$,
                and $R_{n,l}$ is a fixed rotation for every $l \in \Sn$
		  satisfying
                 \begin{equation}
                    R_{n,l}(l) = e^n \;, \qquad
                    R_{n,l}(\mbox{span}\{l\}^\perp) =
                      \mbox{span}\{ e^1, e^2, \ldots, e^{n-1} \} \;.
                  \label{rotate}
                 \end{equation}
 \end{itemize}
We also note that the embedding is positively linear by \cite[Proposition~3.10
and Theorem~4.17]{BaiFar2001}, i.e.
 \begin{align} \label{eq:embed_pos_lin}
    \Embed{n}{ \lambda A + \mu B }
      & = \lambda \Embed{n}{ A } + \mu \Embed{n}{ B }
        \quad \text{($A,B \in \Cn$, $\lambda,\mu \geq 0$)}.
 \end{align}

\section{The Directed Subdifferential \protect\newline with Known \chR{Delta-Convex} Structure}
\label{SecDirSubdDC}

Before introducing the directed subdifferential, let us recall that the classical Moreau-Rockafellar \textit{convex  subdifferential}
of a convex function $f: \R^n \rightarrow \R$ at $x \in \R^n$ is a compact convex set
 \begin{equation}
       \dc f(x) := \{ s \in \R^n \,|\, \forall y \in \R^n:\
                         \langle s,y-x \rangle + f(x) \leq f(y) \}\;.
     \label{Equ_conv_subdiff_1}
 \end{equation}
In the sequel, the Moreau-Rockafellar {\em convex subdifferential
of a sublinear function} $g$ at zero is denoted by $\dc g$ instead of $\dc g(0)$.

Note that $\Supp(l,C)$ equals the subdifferential of the support
function of $C$ at $l$ \cite[Corollary~23.5.3]{Roc1972}.

It is well-known (see e.g.~\cite[Chap.~VI, Definition~1.1.4 and Example~3.1, (3.1)]{HirUrrLem1993}) that
\begin{align}
  \supf{l}{\dc f(x)} & =f'(x; l)\;, \label{12'} \\
  \SubD{ \supf{\cdot}{A} }{l} & = \Supp(l, A) \quad (A \in \Cn)\;, \label{12'''}
    \\
  \intertext{and hence,}
  \SubD{ \dirDeriv{f}{x}{\cdot} }{l} & = \Supp(l, \dc f(x)) \;. \label{12''}
\end{align}
where $f'(x; l)$ is the (Dini) \emph{directional derivative} of $f$ at $x$ in a direction $l$,
$$
f'(x;l) \ch{:=} \lim_{\ch{t\downarrow 0}} \frac{f(x+tl)-f(x)}{t} \;.
$$

The following statement is \cite[Proposition~3.1 from Chap.~I.3]{DemRub1995}
and is needed later for proving the closure of arithmetic operations for
directed subdifferentiable functions:

\begin{remark}\label{rem:DirDer} Let $f_1,f_2:\R^n\to \R$ be directionally differentiable at $x\in \R^n$. Then their sum, product and quotient (if $f_2(x) \ne 0$) are also directionally differentiable
at this point and the following formulas hold for $\alpha,\beta \in \R$:
\begin{align*}
(\alpha f_1+\beta f_2)'(x;l) & = \alpha f'_1(x;l)+\beta f'_2(x;l) \;,\\
(f_1\cdot f_2)'(x;l) & = f_1(x)f'_2(x;l)+f_2(x)f'_1(x;l) \;,\\
\left(\frac{f_1}{f_2}\right)'(x;l) & = -\frac{f_1(x)f'_2(x;l)-f_2(x)f'_1(x;l)}{[f_2(x)]^2}
\end{align*}
\end{remark}

A similar closedness property with respect to min/max operations is given in \cite[Corollary~3.2 in Sect.~I.3]{DemRub1995}:

\begin{remark}\label{rem:DirDerMaxMin} \sloppy{Let $f_i:\R^n\to \R$, $i\in I = \{1,2,\dots, p\}$ have finite Dini directional derivatives at $x\in \R^n$ for all $l\in \Sphn$. Then the pointwise minimum and pointwise maximum functions ${f_{\max}(\cdot) \chR{:=} \max_{i\in I}f_i(\cdot)}$ and ${f_{\min}(\cdot) \chR{:=} \min_{i\in I} f_i(\cdot)}$ are also Dini directionally differentiable at $x$ with}
$$
f_{\max}'(x;l) = \max_{i\in I(x)} f_i'(x;l);\quad f_{\min}'(x;l) = \min_{i\in J(x)} f_i'(x;l) \;,
$$
where $I(x) \chR{:=} \{i\in I\,|\, f_i(x) = f_{\max}(x)\}$, $J(x) \chR{:=} \{i\in I\,|\, f_i(x) = f_{\min}(x)\}$.
\end{remark}

Clearly, we can embed the Moreau-Rockafellar subdifferential in the space of directed sets, i.e.
 \begin{align} \label{eq:embed_conv_subd}
    \dd f(x)
      & = \begin{cases}
             \big( \supf{l}{ \SubD{f}{x} } \big)_{l = \pm 1}
               & \ \, \text{if $n = 1$}, \\
             \big( \Embed{n-1}{\Prj_{n-1,l}(\Supp(l, \SubD{f}{x}))},
                \supf{l}{ \SubD{f}{x} }
             \big)_{l \in \Sn} & \ \, \text{if $n \geq 2$}.
          \end{cases}
 \end{align}
Moreover, using the embedding, we can define the directed subdifferential of a DC function \cite{BaierFarkhiDC}.

\begin{definition}
\label{Def_Dir_subdif}
   Let $f: \R^n \rightarrow \R$ be DC, i.e.~$f=g-h$, where $g,h:\R^n\to \R$ are convex.
   The {\em directed subdifferential} of $f$ at $x$ is defined by
    \begin{align}\label{eq:defDCdd}
       \dd_{DC} f(x) & \ch{:=} \Embed{n}{ \dc g(x)}
                        - \Embed{n}{ \dc h(x)} \;.
    \end{align}
\end{definition}
The directed subdifferential was extended to the larger class of
quasidifferentiable functions in \cite{BaiFarRosh2012a, BaiFarRosh2012b}.
The core idea of this generalization is that instead of DC functions more general functions are considered, whose directional derivatives are DC (see e.g.~\cite[Chap.~3]{DemRub1995}
and \cite[Sect.~10.2]{PallaUrban2002}).
The directed subdifferential is then defined via the directional derivative.

\begin{definition}
\label{Def_Dir_subdif_QD}
   Let $f: \R^n \rightarrow \R$ be \emph{quasidifferentiable (QD)}, i.e.
    \begin{align*}
       f'(x; l) & = \supf{l}{ \QuLowerSubD{f}{x} }
         - \supf{l}{ \oldminus \QuUpperSubD{f}{x} }
         \quad (l \in \Sn)
    \end{align*}
   with two convex, compact, nonempty sets $\QuLowerSubD{f}{x},
   %%% !!! use of "i.e.~" instead of ":", since afterwards ":=" is used
   \QuUpperSubD{f}{x} \subset \R^n$. Here\chR{,} $\oldminus C$ is the algebraic negative of the convex set $C$\chR{, i.e.~}$\oldminus C \ch{:=} \{-x,\,|\, x\in C\}$.\\
   The {\em directed subdifferential} of $f$ at $x$ is defined by
    \begin{align}\label{eq:defQDdd}
       \dd_{QD} f(x) & \ch{:=} \Embed{n}{ \QuLowerSubD{f}{x} }
                        - \Embed{n}{ \oldminus \QuUpperSubD{f}{x} } \;.
    \end{align}
\end{definition}
Since
 \begin{align*}
    \QuLowerSubD{f}{x} & = \dc g(x) \;, \
    \QuUpperSubD{f}{x} = \oldminus \dc h(x)
 \end{align*}
for a DC function $f = g-h$, Definition~\ref{Def_Dir_subdif_QD}
is an extension of the directed subdifferential for DC functions introduced
in Definition~\ref{Def_Dir_subdif}, namely it holds for DC functions
 \begin{align} \label{eq:dir_subd_ext}
    \dd_{QD} f(x) & = \dd_{DC} f(x) = \dd f(x) \;.
 \end{align}
We would like to turn our attention to quasidifferentiable functions and
state the following lemma which is a slight extension of
\cite[Proposition~3.15]{BaiFarRosh2010} from convex
to quasidifferentiable functions.

\begin{lemma}\label{lem:ddQD} Let $f:\R^n\to \R$ be quasidifferentiable. Then the directed subdifferential of $f$ fulfills
\begin{equation}\label{eq:ddQD01}
\dd_{QD} f(x) = \dd_{DC} f'(x;\cdot)(0) \;.
\end{equation}
\end{lemma}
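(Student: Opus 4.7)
The plan is to unpack both sides of the claimed identity via the definitions and match them term-by-term; the proof is essentially a consequence of the fact that the subdifferential at $0$ of a support function recovers the generating set.

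First, I would write out the left-hand side directly from Definition~\ref{Def_Dir_subdif_QD}:
\begin{align*}
  \dd_{QD} f(x) & = \Embed{n}{\QuLowerSubD{f}{x}} - \Embed{n}{\oldminus \QuUpperSubD{f}{x}}.
\end{align*}
For the right-hand side, I would exhibit a concrete DC (in fact, sublinear minus sublinear) decomposition of the directional derivative $l \mapsto f'(x;l)$. Since $f$ is quasidifferentiable, Definition~\ref{Def_Dir_subdif_QD} gives
\begin{align*}
  f'(x;l) & = g_1(l) - g_2(l), \quad g_1(l) := \supf{l}{\QuLowerSubD{f}{x}},\ g_2(l) := \supf{l}{\oldminus \QuUpperSubD{f}{x}},
\end{align*}
so $g_1, g_2$ are support functions of nonempty convex compact sets, hence sublinear (and in particular convex). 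Thus the map $l \mapsto f'(x;l)$ is DC with this decomposition, and Definition~\ref{Def_Dir_subdif} applies at the point $0 \in \R^n$, yielding
\begin{align*}
  \dd_{DC} f'(x;\cdot)(0) & = \Embed{n}{\dc g_1(0)} - \Embed{n}{\dc g_2(0)}.
\end{align*}

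The main step is to evaluate $\dc g_i(0)$. Using~\eqref{12'''} applied to the convex compact sets $\QuLowerSubD{f}{x}$ and $\oldminus \QuUpperSubD{f}{x}$, together with the identity $\Supp(0, C) = C$ recorded right after the definition of the support face, I obtain
\begin{align*}
  \dc g_1(0) & = \SubD{\supf{\cdot}{\QuLowerSubD{f}{x}}}{0} = \Supp(0, \QuLowerSubD{f}{x}) = \QuLowerSubD{f}{x}, \\
  \dc g_2(0) & = \SubD{\supf{\cdot}{\oldminus \QuUpperSubD{f}{x}}}{0} = \Supp(0, \oldminus \QuUpperSubD{f}{x}) = \oldminus \QuUpperSubD{f}{x}.
\end{align*}
Substituting these into the expression for $\dd_{DC} f'(x;\cdot)(0)$ and comparing with the expression for $\dd_{QD} f(x)$ gives the claim~\eqref{eq:ddQD01}.

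There is no real obstacle — the argument is a clean unpacking of definitions — but one point to flag is that Definition~\ref{Def_Dir_subdif} is stated with respect to a particular DC decomposition $f = g - h$. Here I use it with the specific sublinear decomposition $f'(x;\cdot) = g_1 - g_2$ read off from the quasidifferential of $f$, so no independence-of-decomposition result is required. (In the DC case, the corresponding identity for $\dd_{DC}$ would follow along the same lines, which is consistent with~\eqref{eq:dir_subd_ext}.)
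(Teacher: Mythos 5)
Your proposal is correct and follows essentially the same route as the paper: both read off the sublinear decomposition $f'(x;\cdot)=\supf{\cdot}{\QuLowerSubD{f}{x}}-\supf{\cdot}{\oldminus\QuUpperSubD{f}{x}}$ from the definition of quasidifferentiability, identify $\dc g_i(0)$ with the generating sets, and compare with Definition~\ref{Def_Dir_subdif_QD}. Your justification of $\dc g_i(0)$ via~\eqref{12'''} and $\Supp(0,C)=C$ is slightly more explicit than the paper's, but it is the same argument.
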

\begin{proof} \sloppy{Observe that by the definition of a QD function, its directional derivative $f'(x;\cdot)$ is DC, i.e.~$f'(x;l) = g(l)-h(l)$, with ${g(l) = \delta^*(l,\QuLowerSubD{f}{x})}$, ${h(l) = \delta^*(l,\oldminus \QuUpperSubD{f}{x})}$; hence, $\partial g(0) = \QuLowerSubD{f}{x}$, $\partial h(0) = \oldminus \QuUpperSubD{f}{x}$. From \eqref{eq:defDCdd} we have}
$$
\dd_{DC} f'(x;\cdot)(0)  = \Embed{n}{ \dc g(x)} - \Embed{n}{ \dc h(x)} 
= \Embed{n}{\QuLowerSubD{f}{x}} - \Embed{n}{\oldminus \QuUpperSubD{f}{x}},
$$
which coincides with \eqref{eq:defQDdd}.

%%     Setting $\dirDeriv{f}{x}{l} = \widetilde{g}(l) - \widetilde{h}(l)$ with
%%     %
%%      \begin{align*}
%%         \widetilde{g}(l) & = \supf{l}{ \QuLowerSubD{f}{x} } \;, \
%%           \widetilde{h}(l) = \supf{l}{ \oldminus \QuUpperSubD{f}{x} } \;,
%%      \end{align*}
%%     %
%%     we can use~\eqref{12'''} which yields
%%     %
%%      \begin{align*}
%%         \dc \widetilde{g}(l) & = \Supp(l, \QuLowerSubD{f}{x}) \;, \
%%           \dc \widetilde{h}(l) = \Supp(l, \oldminus \QuUpperSubD{f}{x}) \;.
%%      \end{align*}
%%     %
%%     Theorem~\ref{prop:dir_subd_dc} applies and we can prove that
%%     %
%%      \begin{align*}
%%         \dd_{DC} f'(x;\cdot)(0) & = \Embed{n}{ \dc \widetilde{g}(x)}
%%           - \Embed{n}{ \dc \widetilde{h}(x)} \\
%%         & = \Embed{n}{ \Supp(0, \QuLowerSubD{f}{x}) }
%%           - \Embed{n}{ \Supp(0, \oldminus \QuUpperSubD{f}{x}) } \\
%%         & = \Embed{n}{ \QuLowerSubD{f}{x} }
%%           - \Embed{n}{ \oldminus \QuUpperSubD{f}{x} } = \dd_{QD} f(x) \;.
%%      \end{align*}
%%     %
\end{proof}

The previous result allows \ch{one} to carry over results on the directed subdifferential
from the DC to the quasidifferentiable functions.

\section{The Directed Subdifferential \protect\newline
         without a \chR{Delta-Convex} Structure}
\label{SecConstrForDC}

In this \ch{section, we} suggest a technique for constructing the directed subdifferential only  from the directional derivative, i.e.~without using any information on the DC structure of the function and avoiding the direct embedding of the relevant subdifferentials. For the sake of clarity, we start with convex functions, then extend the construction to DC functions, and end up with defining a more general class of `directed subdifferentiable' functions.

Starting from the equality~\eqref{eq:embed_conv_subd} of the embedded
subdifferential of convex analysis,
we now obtain an explicit representation of the directed subdifferential of a convex function via its directional derivatives. Here, we can apply~\eqref{12'} to
see that the second component of the directed subdifferential
in~\eqref{eq:embed_conv_subd} equals the directional derivative.
To obtain an explicit formula for the first component as well, we use
an auxiliary function $f_l(\cdot)$ which depends only on $n-1$ variables.

We first gather the result for convex functions and apply the new
representation to DC and quasidifferentiable functions in a second step.

\subsection{Convex Functions}

Recall that the directed subdifferential of a convex function is the embedded image of its Moreau-Rockafellar subdifferential into the space of directed sets.
We describe its construction first in one dimension.

It is not difficult to observe that for any convex function $f:\R\to \R$ with 
%%% !!! do not define $[a,b]$ as subdifferential \dots
%%% !!! $[a, b] : = \dc f(x) \chR{=[a,b]}$ we have
$\dc f(x) \chR{=[a,b]}$ we have
$$
f'(x;-1) = \max_{v\in [a,b]}\{-v\}= -a;\quad f'(x;1) = \max_{v\in [a,b]}\{v\} = b \;,
$$
and hence, by the definition of the directed subdifferential,
\begin{equation}\label{eq:ddd_c1d}
\dd_{DC} f(x) = J_1(\dc f(x)) = \overrightarrow{[a,b]}=\overrightarrow{[-f'(x;-1),f'(x;1)]} \;.
\end{equation}

Based on this construction, we now obtain a similar representation for higher dimensions.  Consider a convex function $f:\R^n\to \R$, and fix a point $x\in \R^n$. Let $f'(x;\cdot)$ be the directional derivative of $f$ at $x$. To define an explicit expression of the first term in the right-hand side of \eqref{eq:embed_conv_subd} for a given vector $l\in \Sn$ we define an auxiliary function $f_l:\R^{n-1}\to \R$ which is
a restriction of $f'(x;\cdot)$  to the hyperplane $l+ \myspan \{l\}^\perp$, i.e.~for $y\in \R^{n-1}$ let
\begin{equation}\label{eq:fxl}
f_l(y) \ch{:=} f'(x;l+\Prj\transp_{n-1,l}y) \;,
\end{equation}
%%% !!! don't define transposed of mapping
%%% !!! where $\Prj\transp_{n-1,l}\ch{:=} R^{-1}_{n,l}\pi\transp_{n-1,n}$ maps $\R^{n-1}$ to $\myspan \{l\}^\perp$. 
where \chR{$\Prj_{n-1,l}\ch{:=} \pi_{n-1,n} R_{n,l}$ and $\Prj\transp_{n-1,l}$} maps $\R^{n-1}$ to $\myspan \{l\}^\perp$. 
Here\chR{,} $\pi\transp_{n-1,n}$ maps points in $\R^{n-1}$ to $\R^n$ by adding a zero last coordinate,
%%% !!! delete "and" to avoid a linebreak in formula of $R^{-1}_{n,l}$:
\chR{$R^{-1}_{n,l}:\R^n\to \R^n$} is the inverse of the fixed rotation introduced in Section~\ref{SecPreliminaries} (see~\eqref{rotate}). Thus,
$$
R^{-1}_{n,l}(e^n) = l, \quad R^{-1}_{n,l} ({\myspan}\{e_1,e_2,\dots, e_{n-1}\}) = {\myspan}\{l\}^\perp \;.
$$
In the following
proposition the first (lower-dimensional) component of the directed subdifferential is expressed via a directional derivative of a certain function
restricted to an affine space.
\begin{prop} \label{prop:dir_subd_conv} Let $f:\R^n\to \R$ be convex. Then,
\begin{equation}\label{eq:ddn}
\dd_{DC} f(x) = \begin{cases}
              \left( f'(x;l) \right)_{l = \pm 1} & \quad\text{for $n=1$}, \\
              \left(\dd_{DC} f_l(0),f'(x;l)\right)_{l\in \Sn}
                & \quad\text{for $n \geq 2$}.
           \end{cases}
\end{equation}
\end{prop}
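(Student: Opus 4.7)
The plan is to apply the embedding formula~\eqref{eq:embed_conv_subd} directly, which for a convex $f$ already expresses $\dd_{DC} f(x) = J_n(\dc f(x))$ in component form, and then to match each of its two components with the quantities on the right-hand side of~\eqref{eq:ddn}.

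The one-dimensional case is already contained in~\eqref{eq:ddd_c1d}: since the directed interval satisfies $\overrightarrow{[a,b]}=(-a,b)$, the tuple $(f'(x;l))_{l=\pm 1}$ with $l=-1,+1$ is exactly $\overrightarrow{[-f'(x;-1),f'(x;1)]}$. For $n\geq 2$, the second (scalar) component of~\eqref{eq:embed_conv_subd} is $\delta^*(l,\dc f(x))$, which equals $f'(x;l)$ by~\eqref{12'}. So the remaining work concentrates entirely on the first (directed) component, namely proving
\[
J_{n-1}\bigl(\Prj_{n-1,l}(Y(l,\dc f(x)))\bigr)\;=\;\dd_{DC}f_l(0)\qquad(l\in\Sn).
\]

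The natural route is to observe that $f_l$ is itself convex: by~\eqref{12'} the directional derivative $f'(x;\cdot)$ is the support function of $\dc f(x)$, hence sublinear, and $f_l$ is its composition with the affine map $y\mapsto l+\Prj\transp_{n-1,l}y$. Consequently $\dd_{DC}f_l(0)=J_{n-1}(\dc f_l(0))$, and the task reduces to identifying $\dc f_l(0)$ with $\Prj_{n-1,l}(Y(l,\dc f(x)))$. I would invoke the standard chain rule for subdifferentials of finite-valued convex functions through an affine map (which requires no qualification condition) to obtain
\[
\dc f_l(0)\;=\;\Prj_{n-1,l}\,\dc f'(x;\cdot)(l),
\]
and then use~\eqref{12''} to rewrite the right-hand side as $\Prj_{n-1,l}(Y(l,\dc f(x)))$, exactly the expression appearing inside $J_{n-1}(\cdot)$ in~\eqref{eq:embed_conv_subd}.

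The one delicate point, and the step I would check most carefully, is the adjoint identification underlying the chain rule: the adjoint of the linear map $y\mapsto\Prj\transp_{n-1,l}y$ is precisely $\Prj_{n-1,l}$. This follows from the definition $\Prj_{n-1,l}=\pi_{n-1,n}R_{n,l}$ with $R_{n,l}$ orthogonal (so $R_{n,l}\transp=R_{n,l}^{-1}$) and $\pi\transp_{n-1,n}$ the natural injection $\R^{n-1}\hookrightarrow\R^n$. Noting additionally that the constant shift by $l$ in the inner affine map affects only the base point at which $\dc f'(x;\cdot)$ is evaluated, not the transformation of the subgradients themselves, this completes the identification of both components and hence establishes~\eqref{eq:ddn}.
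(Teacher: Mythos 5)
Your proposal is correct, and its overall skeleton coincides with the paper's: both start from the embedded subdifferential \eqref{eq:embed_conv_subd}, dispose of $n=1$ via \eqref{eq:ddd_c1d} and of the scalar component via \eqref{12'}, and reduce everything to the identity $\dc f_l(0)=\Prj_{n-1,l}\,\Supp(l,\dc f(x))$. Where you differ is in how that identity is established. The paper argues on the primal side: it computes $f_l'(0;u)$ directly as a limit, obtaining $f_l'(0;u)=[f'(x;\cdot)]'(l;\Prj\transp_{n-1,l}u)$, then invokes the identity $[f'(x;\cdot)]'(l;v)=\delta^*(v;\Supp(l,\dc f(x)))$ (cited from an earlier paper), pushes the adjoint through the support function via $\langle \zeta,\Prj\transp_{n-1,l}u\rangle=\langle\Prj_{n-1,l}\zeta,u\rangle$, and finally reads off $\dc f_l(0)$ from \eqref{12'}. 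You argue on the dual side: $f_l$ is the finite convex function $f'(x;\cdot)$ precomposed with an affine map, so the qualification-free chain rule gives $\dc f_l(0)=(\Prj\transp_{n-1,l})^{*}\,\dc[f'(x;\cdot)](l)$, and \eqref{12''} identifies $\dc[f'(x;\cdot)](l)$ with $\Supp(l,\dc f(x))$. These are dual formulations of the same computation, and the adjoint identification $(\Prj\transp_{n-1,l})^{*}=\Prj_{n-1,l}$ (valid since $R_{n,l}$ is orthogonal and $\pi\transp_{n-1,n}$ is the canonical injection) appears in both, only packaged differently; you are also right that the shift by $l$ only moves the base point at which $\dc[f'(x;\cdot)]$ is evaluated. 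Your route is slightly shorter at the cost of importing the convex chain rule as a black box, whereas the paper's limit-and-support-function computation is more self-contained and better matches its programme of working with directional derivatives alone.
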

\begin{proof} Taking~\eqref{eq:ddd_c1d} into account, we only prove the case $n \geq 2$.

First observe that the directional derivative $\dirDeriv{f}{x}{l}$ of a convex function is also convex as a function of the direction $l$, therefore $\dirDeriv{f}{x}{l}$ is
directionally differentiable, and the higher directional derivatives as well. Hence, in the case of convex functions directional derivatives of any `order' are defined.

Since $f$ is convex, we can apply~\eqref{12'}--\eqref{12''} and it follows that
\begin{align*}
f'_l(0;u)
    & = \lim_{\ch{t\downarrow 0}}\frac{f_l(tu)-f_l(0)}{t}\\
    & = \lim_{\ch{t\downarrow 0}}\frac{f'(x;l+t \Prj\transp_{n-1,l} u)-f'(x;l)}{t}\\
    & = [f'(x;\cdot)]'(l;\Prj\transp_{n-1,l} u) \;.%\\
\end{align*}
    Using the equality
 $[f'(x;\cdot)]'(l;v)=\d^*(v;Y(l,\dc f(x)))$ (see
 e.g.~\cite[Lemma~3.4]{BaiFarRosh2012a}), we get
 \begin{align*}
f'_l(0;u)
    & = \max_{\zeta \in Y(l,\dc f(x))} \zeta\transp \Prj\transp_{n-1,l}u\\
    & = \max_{\zeta\in Y(l,\dc f(x))} (\Prj_{n-1,l} \zeta)\transp u\\
    & =\max_{\xi\in \Prj_{n-1,l} Y(l,\dc f(x))} \xi\transp u
      =\d^*(u;\Prj_{n-1,l} Y(l,\dc f(x))) \;.
\end{align*}
From here, by~\eqref{12'} we get $\Prj_{n-1,l} Y(l,\dc f(x)) = \dc f_l(0)$, hence
$$
\dd_{DC} f_l(0) = J_{n-1}(\Prj_{n-1,l} Y(l;\dc f(x))) \;.
$$
This, together with \eqref{12'} and \eqref{eq:embed_conv_subd}
implies \eqref{eq:ddn}.
\end{proof}

We illustrate our construction with a simple example.

\begin{exam}\label{ex:DSConvex}
Consider the function $f:\R^2\to \R$, $f(x) \chR{:=} \max\{|x_1|,|x_2|\}$, and for any $l\in \SphOne$ let
$$
R_{2,l} = \left[\begin{array}{rr}l_2  & - l_1\\ l_1 & l_2\end{array}\right] \;, \mbox{ then } R^{-1}_{2,l} = \left[\begin{array}{rr}l_2  &  l_1\\ -l_1 & l_2\end{array}\right] \;.
$$
We first construct the directed subdifferential by embedding the subdifferential of $f$ into the space of the directed sets as in the original definition of the directed subdifferential, and then show that the same result can be obtained via the directional derivative approach.

Since $f$ is positively homogeneous, $f'(0;p) = f(p)$ for all $p\in \R^2$; besides it is not difficult to see (e.g.~applying \cite[Corollary~4.3.2 from Chap.~VI]{HirUrrLem1993}) that the grey polytope in Fig.~\ref{fig:Example1FirstMethod}(a) equals
$$
\dc f = \dc f(0) = \co\{(1,0), (0,1), (-1,0), (0,-1)\} \;.
$$
By equality~\eqref{eq:embed_conv_subd} we have
\begin{equation}\label{eq:001}
\dd_{DC} f(0) = J_2(\dc f) = \left(J_1(\Prj_{1,l}(Y(l,\dc f))), \d^*(l;\dc f)\right)_{l\in \SphOne} \;.
\end{equation}
It is not difficult to observe that for $l\in \SphOne$
$$
Y(l,\dc f) = \left\{
    \begin{array}{ll}
        \{(1,0)\} \;, & l_1>|l_2| \;,\\
        \{(-1,0)\} \;, & l_1<-|l_2| \;,\\
        \{(0,1)\} \;, & l_2>|l_1| \;,\\
        \{(0,-1)\} \;, & l_2<-|l_1| \;,\\
        \co \{(1,0), (0,1)\} \;, & l = (\frac{1}{\sqrt{2}},\frac{1}{\sqrt{2}}) \;,\\
        \co \{(-1,0), (0,1)\} \;, & l = (-\frac{1}{\sqrt{2}},\frac{1}{\sqrt{2}}) \;,\\
        \co \{(-1,0), (0,-1)\} \;, & l = (-\frac{1}{\sqrt{2}},-\frac{1}{\sqrt{2}}) \;,\\
        \co \{(1,0), (0,-1)\} \;, & l = (\frac{1}{\sqrt{2}},-\frac{1}{\sqrt{2}}) \;.
    \end{array}
\right.
$$
Since
$$
\Prj_{1,l}=\pi_{1,2}R_{2,l} = [1,0] \left[\begin{array}{rr}l_2  & - l_1\\ l_1 & l_2\end{array}\right] = [l_2,-l_1] \;,
$$
we have for $l\in \SphOne$
\begin{equation}\label{eq:DlviaJ1}
\overrightarrow D_l: = J_1(\Prj_{1,l}(Y(l,\dc f))) = \left\{
    \begin{array}{ll}
        \overrightarrow{\{l_2\}} \;, & l_1>|l_2| \;,\\
        \overrightarrow{\{-l_2\}} \;, & l_1<-|l_2| \;,\\
        \overrightarrow{\{-l_1\}} \;, & l_2>|l_1| \;,\\
        \overrightarrow{\{l_1\}} \;, & l_2<-|l_1| \;,\\
        \overrightarrow{\left[-\frac{1}{\sqrt{2}},\frac{1}{\sqrt{2}}\right]} \;, & |l_1|=|l_2| \;.
    \end{array}
\right.
\end{equation}
The construction of the directed subdifferential is illustrated on Fig.~\ref{fig:Example1FirstMethod}.

\begin{figure}[h]
\centering \includegraphics[height=120pt, keepaspectratio ]{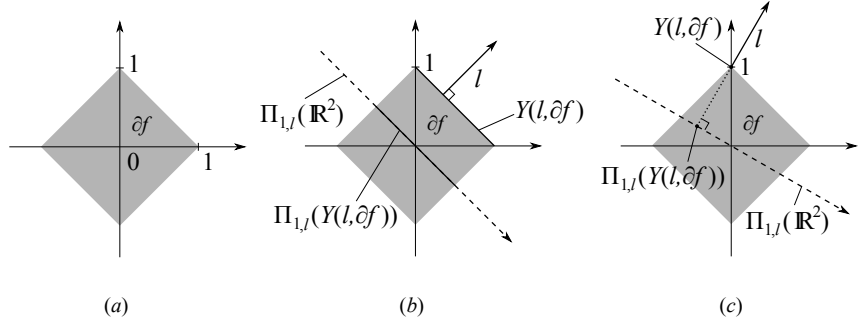}
\caption{Construction of the directed subdifferential via embedding of $\partial f$ (Example~\ref{ex:DSConvex}): a) the subdifferential of $f$ at $0_2$; b) projecting the supporting faces onto the one-dimensional subspace $\Pi_{1,l}(\R^2)$ for  $l=(1/\sqrt{2},1/\sqrt{2})$ and c) for $l = (1/2, \sqrt{3}/2)$}
\label{fig:Example1FirstMethod}
\end{figure}

\ch{Noting} that by $f'(0;l) = \d^*(l;\dc f)$, we have
\begin{equation}\label{eq:egDDviaEmb}
\dd_{DC} f(0) = \left(\overrightarrow D_l, f'(0;l)\right)_{l\in \SphOne}
\end{equation}
from \eqref{eq:001} and \eqref{eq:DlviaJ1},
where $\overrightarrow D_l$ is given by \eqref{eq:DlviaJ1}.

We now obtain the directed subdifferential via the directional derivative based procedure. Observe that
for every $y\in \R$
$$
l+\Prj\transp_{1,l}y
=
        \left[\begin{array}{r} l_1\\ l_2\end{array}\right]
        + \left[\begin{array}{rr}l_2  &  l_1\\ -l_1 & l_2\end{array}\right]  \left[\begin{array}{r} 1 \\ 0\end{array}\right]y\\
=
        \left[\begin{array}{r} l_1+ yl_2\\ l_2-yl_1\end{array}\right] \;,
$$
hence,
$$
f_l(y) = f'(0;l+\Prj\transp_{1,l}y) = f(l+\Prj\transp_{1,l}y) = \max\{|l_1+yl_2|, |l_2-yl_1|\} \;.
$$
An illustration of how to construct $f_l$ is given in Fig.~\ref{fig:Example1SecondMethod}.

\begin{figure}[ht]
\centering \includegraphics[height=200pt, keepaspectratio ]{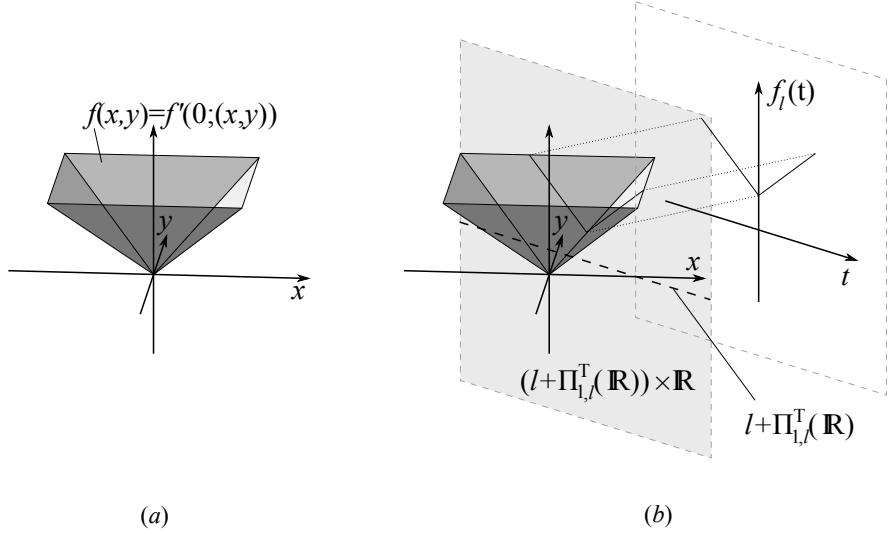}
\caption{Construction of the restriction $f_l$ of the function $f$ (Example~\ref{ex:DSConvex})
for $l=(1/\sqrt{2}, 1/\sqrt{2})$: a) graph of the function $f$ which in this case coincides with $f'(0_2;\cdot)$; b) the restriction of $f$ to $l+\Prj\transp_{1,l}(\R)$} \label{fig:Example1SecondMethod}
\end{figure}

It is a trivial exercise to compute
$$
f_l'(0;1) = \left\{
  \begin{array}{rr}
        l_2 \;, & l_1>|l_2| \;,\\
        -l_2 \;, & l_1<-|l_2| \;,\\
        -l_1 \;, & l_2>|l_1| \;,\\
        l_1 \;, & l_2<-|l_1| \;,\\
        \frac{1}{\sqrt{2}} \;, & |l_1|=|l_2| \;,
  \end{array}
\right.
\qquad
f_l'(0;-1) = \left\{
  \begin{array}{rr}
        -l_2 \;, & l_1>|l_2| \;,\\
        l_2 \;, & l_1<-|l_2| \;,\\
        l_1 \;, & l_2>|l_1| \;,\\
        -l_1 \;, & l_2<-|l_1| \;,\\
        \frac{1}{\sqrt{2}} \;, & |l_1|=|l_2| \;.
  \end{array}
\right.
$$

Applying \eqref{eq:ddd_c1d} to compute $ \dd_{DC} f_l(0)$, we observe that $\dd_{DC} f_l(0) = \overrightarrow D_l$, where $\overrightarrow D_l$ is defined by \eqref{eq:DlviaJ1}. Therefore, by \eqref{eq:ddn}
$$
\dd_{DC} f(x) = \left(\dd_{DC} f_l(0),f'(x;l)\right)_{l\in \SphOne}=\left(\overrightarrow D_l,f'(x;l)\right)_{l\in \SphOne} \;,
$$
which coincides with \eqref{eq:egDDviaEmb}.
\end{exam}

\ch{In next} section we show that the directed subdifferential of a DC and a QD function can also be constructed via directional derivatives,
i.e.~Proposition~\ref{prop:dir_subd_conv} also holds for DC and QD functions.

\subsection{\chR{Delta-Convex} and Quasidifferentiable Functions}\label{Subsec:DCfunctons}

Assume that $f:\R\to\R$ is DC with $f=g-h$ (where $g,h:\R\to \R$ are convex), then for any $x\in \R$ using \eqref{eq:ddd_c1d} we get
\begin{align}
\dd_{DC} f (x) & = J_1(\dc g(x))- J_1(\dc h(x)) \nonumber \\
          & = \overrightarrow{[-g'(x;-1),g'(x;1)]} -\overrightarrow{[-h'(x;-1),h'(x;1)]} \nonumber \\
          & = \overrightarrow{[-(g'(x;-1)-h'(x;-1)),g'(x;1)-h'(x;1)]} \nonumber \\
          & = \overrightarrow{[-f'(x;-1),f'(x;1)]} \;, \label{eq:dir_subd_dc_1d}
\end{align}
%%% !!! fill words "the directional derivative" to avoid big horizontal gaps
%%% !!! different page breaks until p. 19 due to additional line, but I think this is okay
\sloppy{since \chR{the directional derivative} $f'(x;\cdot) = g'(x;\cdot) - h'(x;\cdot)$ (see e.g.~Remark~\ref{rem:DirDer}).
Hence, ${\dd_{DC} f(x) = \overrightarrow{[-f'(x;-1),f'(x,1)]}}$, which coincides with the expression \eqref{eq:ddd_c1d}.}

Now let $f:\R^n\to \R$ be DC, and fix a point $x\in \R^n$. Let $f'(x;\cdot)$ be the directional derivative of $f$ at $x$. Consider the restriction $f_l$ of the directional derivative defined by \eqref{eq:fxl}.

\begin{theorem} \label{prop:dir_subd_dc}
  Let $f:\R^n\to \R$ be DC with $f = g - h$ with convex functions
  $g,h: \R^n\to \R$. Then,
\begin{equation}\label{eq:ddn_dc}
\dd_{DC} f(x) = \begin{cases}
              \left( f'(x;l) \right)_{l = \pm 1} & \quad\text{for $n=1$}, \\
              \left(\dd_{DC} f_l(0),f'(x;l)\right)_{l\in \Sn}
                & \quad\text{for $n \geq 2$}.
           \end{cases}
\end{equation}
\end{theorem}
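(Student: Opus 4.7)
The case $n=1$ is already handled by the computation~\eqref{eq:dir_subd_dc_1d} carried out immediately before the statement, so I will focus on $n \geq 2$.

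My plan is to reduce the DC case to the convex case (Proposition~\ref{prop:dir_subd_conv}) componentwise. Starting from the definition, $\dd_{DC} f(x) = \Embed{n}{\dc g(x)} - \Embed{n}{\dc h(x)}$. Since $g$ and $h$ are convex, $\Embed{n}{\dc g(x)} = \dd_{DC} g(x)$ and $\Embed{n}{\dc h(x)} = \dd_{DC} h(x)$, so Proposition~\ref{prop:dir_subd_conv} yields the explicit representations
\begin{align*}
 \Embed{n}{\dc g(x)} & = \bigl(\dd_{DC} g_l(0),\, g'(x;l)\bigr)_{l\in\Sn}, \\
 \Embed{n}{\dc h(x)} & = \bigl(\dd_{DC} h_l(0),\, h'(x;l)\bigr)_{l\in\Sn},
\end{align*}
where $g_l, h_l: \R^{n-1} \to \R$ are defined from $g'(x;\cdot)$ and $h'(x;\cdot)$ analogously to~\eqref{eq:fxl}. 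Using the componentwise subtraction rule~\eqref{Eq_Operat_Dir_sets}, the difference becomes
\[
 \dd_{DC} f(x) = \bigl(\dd_{DC} g_l(0) - \dd_{DC} h_l(0),\; g'(x;l) - h'(x;l)\bigr)_{l\in\Sn}.
\]

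The two remaining tasks are to identify each slot with the desired quantity. For the scalar slot, Remark~\ref{rem:DirDer} gives $f'(x;l) = g'(x;l) - h'(x;l)$ immediately. For the $(n-1)$-dimensional slot, the point is that the decomposition $f' = g' - h'$ restricts to $f_l = g_l - h_l$, because by definition
\[
 f_l(y) = f'(x;\,l + \Prj\transp_{n-1,l}y) = g'(x;\,l + \Prj\transp_{n-1,l}y) - h'(x;\,l + \Prj\transp_{n-1,l}y) = g_l(y) - h_l(y).
\]
Since $g'(x;\cdot)$ and $h'(x;\cdot)$ are sublinear (as directional derivatives of convex functions), their restrictions $g_l$ and $h_l$ to the affine subspace $l + \myspan\{l\}^\perp$ are convex. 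Thus $f_l = g_l - h_l$ is itself DC, and Definition~\ref{Def_Dir_subdif} applied in $\R^{n-1}$ gives
\[
 \dd_{DC} f_l(0) = \Embed{n-1}{\dc g_l(0)} - \Embed{n-1}{\dc h_l(0)} = \dd_{DC} g_l(0) - \dd_{DC} h_l(0).
\]
Substituting both identifications into the expression above yields exactly~\eqref{eq:ddn_dc}.

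The only subtle point is verifying that the DC decomposition lifts correctly to the lower-dimensional auxiliary functions — that is, checking that $g_l, h_l$ are convex so that $f_l = g_l - h_l$ is a legitimate DC decomposition on $\R^{n-1}$ and Definition~\ref{Def_Dir_subdif} applies. Once this is settled, everything else is just splitting the recursive definition of directed sets into its two components and appealing to Proposition~\ref{prop:dir_subd_conv} and Remark~\ref{rem:DirDer} on the convex and scalar pieces respectively. No uniqueness issue arises because the resulting equality holds regardless of which DC decomposition is used, as the directed subdifferential itself is uniquely defined.
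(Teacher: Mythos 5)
Your proposal is correct and follows essentially the same route as the paper's proof: both reduce to the convex case by observing $f_l = g_l - h_l$ with $g_l, h_l$ convex (via Remark~\ref{rem:DirDer}), invoke the identification $\dc g_l(0) = \Prj_{n-1,l}\,Y(l,\dc g(x))$ established for convex functions, and conclude by componentwise subtraction in the space of directed sets. The only cosmetic difference is that you package the convex ingredient as a direct application of Proposition~\ref{prop:dir_subd_conv} to $g$ and $h$, while the paper reuses the intermediate identities from that proposition's proof and compares with~\eqref{eq:embed_conv_subd}.
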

\begin{proof} Since~\eqref{eq:dir_subd_dc_1d} is already proved, we only
consider the case $n \geq 2$.

By Remark~\ref{rem:DirDer} $f'(x;l) = g'(x;l)-h'(x;l)$, and hence for any $y\in \R^{n-1}$
$$
f_l(y)  = g'(x;l+\Prj\transp_{n-1,l}y)-h'(x;l+\Prj\transp_{n-1,l}y) = g_l(y)-h_l(y) \;.
$$
Since both $g$ and $h$ are convex, we can use the results from the previous section, i.e.~$\dc g_l = \Prj_{n-1,l} Y(l;\dc g(x))$ and $\dc h_l = \Prj_{n-1,l} Y(l;\dc h(x))$.
Observe that $f_l$ is also DC, since $g_l$ and $h_l$ are convex and $f_l = g_l-h_l$. Therefore, the directed subdifferential of $f_l$ at $0$ is the difference of the convex embeddings of $\dc g_l$ and $\dc h_l$:
\begin{align}\label{eq:dddAn}
\dd_{DC} f_l  & = J_{n-1} (\dc g_l) - J_{n-1} (\dc h_l)\notag \\
         & = J_{n-1} (\Prj_{n-1,l} Y(l;\dc g(x))) - J_{n-1}(\Prj_{n-1,l} Y(l;\dc h(x))) \;;
\end{align}
moreover,
\begin{equation}\label{eq:dddan}
\d^*(l;\dc g(x)) - \d^*(l;\dc h(x)) = g'(x;l) - h'(x;l) = f'(x;l) \;.
\end{equation}
Now comparing \eqref{eq:dddAn} and \eqref{eq:dddan} with the definition of the directed subdifferential, we obtain \eqref{eq:ddn_dc}.
\end{proof}

The case of quasidifferentiable functions is a consequence of the DC case
by taking into account Lemma~\ref{lem:ddQD}:

\begin{corollary} \label{cor:dir_subd_qd}
  Let $f:\R^n\to \R$ be quasidifferentiable with
  $$
     f'(x;l) = \supf{l}{ \QuLowerSubD{f}{x} }
               - \supf{l}{ \oldminus \QuUpperSubD{f}{x} }
     \quad (l \in \Sn)
  $$
  and $\QuLowerSubD{f}{x}, \QuUpperSubD{f}{x} \in \Cn$.
  Then,
\begin{equation}\label{eq:ddn_qd}
\dd_{QD} f(x) = \begin{cases}
              \left( f'(x;l) \right)_{l = \pm 1} & \quad\text{for $n=1$}, \\
              \left(\dd_{QD} f_l(0),f'(x;l)\right)_{l\in \Sn}
                & \quad\text{for $n \geq 2$}.
           \end{cases}
\end{equation}
\end{corollary}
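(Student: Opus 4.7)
My plan is to deduce the corollary from Theorem~\ref{prop:dir_subd_dc} by way of Lemma~\ref{lem:ddQD}. Set $\phi:=f'(x;\cdot):\R^n\to\R$. Since $f$ is quasidifferentiable, $\phi$ is the difference of the support functions of the convex compact sets $\QuLowerSubD{f}{x}$ and $\oldminus\QuUpperSubD{f}{x}$, so $\phi$ is DC on $\R^n$. Lemma~\ref{lem:ddQD} then gives $\dd_{QD}f(x)=\dd_{DC}\phi(0)$, and the task reduces to evaluating $\dd_{DC}\phi(0)$ by applying Theorem~\ref{prop:dir_subd_dc} to the DC function $\phi$ at the base point $0$.

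For $n=1$ this application yields $\dd_{DC}\phi(0)=(\phi'(0;l))_{l=\pm1}$, while for $n\ge 2$ it yields
$$
\dd_{DC}\phi(0)=\bigl(\dd_{DC}\phi_l(0),\;\phi'(0;l)\bigr)_{l\in\Sn},
$$
where, in accordance with~\eqref{eq:fxl} applied to $\phi$ at $0$, $\phi_l(y):=\phi'(0;l+\Prj\transp_{n-1,l}y)$.

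Next I would translate this expression back to $f$. Directional derivatives are positively homogeneous of degree one with $\phi(0)=0$, so for every $v\in\R^n$ one has
$$
\phi'(0;v)=\lim_{t\downarrow 0}\frac{\phi(tv)-\phi(0)}{t}=\phi(v)=f'(x;v).
$$
Taking $v=l$ identifies the scalar component with $f'(x;l)$; taking $v=l+\Prj\transp_{n-1,l}y$ gives $\phi_l(y)=f'(x;l+\Prj\transp_{n-1,l}y)=f_l(y)$, so $\phi_l$ and $f_l$ agree on $\R^{n-1}$ and therefore $\dd_{DC}\phi_l(0)=\dd_{DC}f_l(0)$.

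Finally, because $\phi$ is DC on $\R^n$ and the map $y\mapsto l+\Prj\transp_{n-1,l}y$ is affine, $f_l$ is itself DC on $\R^{n-1}$, and the coincidence~\eqref{eq:dir_subd_ext} of the DC and QD directed subdifferentials on DC functions gives $\dd_{DC}f_l(0)=\dd_{QD}f_l(0)$. Splicing these equalities produces~\eqref{eq:ddn_qd}. I do not anticipate a substantive obstacle: the only step requiring real care is the identification $\phi_l=f_l$, which hinges on the positive homogeneity of $\phi$ at the origin; once that is secured the corollary is a straightforward repackaging of Theorem~\ref{prop:dir_subd_dc} via Lemma~\ref{lem:ddQD}.
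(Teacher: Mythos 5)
Your proposal is correct and follows essentially the same route as the paper: define $\varphi:=f'(x;\cdot)$, note it is DC, use positive homogeneity to get $\varphi'(0;v)=\varphi(v)=f'(x;v)$ and hence $\varphi_l=f_l$, and then combine Lemma~\ref{lem:ddQD} with Theorem~\ref{prop:dir_subd_dc}. Your explicit justification that $\dd_{DC}f_l(0)=\dd_{QD}f_l(0)$ via~\eqref{eq:dir_subd_ext} is a small point the paper leaves implicit, but it does not change the argument.
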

\begin{proof}
   Let $\varphi(l) \chR{:=} f'(x; l)$, then $\varphi(\cdot)$ is a DC function.
   It is easy to conclude that
    \begin{align*}
       \varphi(0) & = f'(x; 0) = 0 \;, \\
       \varphi(h \eta) & = f'(x; h \eta) = h f'(x; \eta) = h \varphi(\eta) \;, \\
       \varphi'(0; \eta) & = \lim_{h \downarrow 0} \frac{1}{h} (\varphi(h \eta)
         - \varphi(0)) = \varphi(\eta) = f'(x; \eta) \;, \\
       \varphi_l(0) & = \varphi'(0; l + \Prj_{n-1,l}\transp \eta)
         = f'(x; l + \Prj_{n-1,l}\transp \eta) = f_l(0) \;.
    \end{align*}
   Theorem~\ref{prop:dir_subd_dc} and Lemma~\ref{lem:ddQD} apply which show
    \begin{align*}
       \dd_{DC} \varphi(0)
         & = \begin{cases}
                \left( \varphi'(0;l) \right)_{l = \pm 1} & \quad\text{for $n=1$}, \\
                \left(\dd_{DC} \varphi_l(0),\varphi'(0;l)\right)_{l\in \Sn}
                  & \quad\text{for $n \geq 2$}
             \end{cases} \\
       & = \begin{cases}
              \left( f'(x;l) \right)_{l = \pm 1} & \quad\text{for $n=1$}, \\
              \left(\dd_{DC} f_l(0), f'(x;l)\right)_{l\in \Sn}
                & \quad\text{for $n \geq 2$}
           \end{cases} \\
       & = \dd_{QD} f(x) \;.
    \end{align*}
\end{proof}

The previous Theorem~\ref{prop:dir_subd_dc} and Corollary~\ref{cor:dir_subd_qd}
can be seen as generalizations of~\cite[Lemma~3.10 and Theorem~3.11]{BaiFarRosh2010}
to the case of difference of two sublinear functions~(DS) functions
$f: \R^n \to \R$
for state dimensions $n \geq 2$ and to DC/quasidifferentiable functions $f: \R^n \to \R$ for $n \in \N$
(in the additional view of \cite[Lemmas~3.6, 3.8 and~3.9]{BaiFarRosh2010}).
In~\cite[Theorem~3.11]{BaiFarRosh2010} the directed subdifferential for such
functions is expressed with the limes superior of Fr\'{e}chet subdifferentials
as first component and the directional derivative as second component (see
\cite{BaiFarRosh2010} for more details).
But in the latter article, the primal aim was to discover the connection of the
Mordukhovich/basic subdifferentials to the Rubinov subdifferential
(the visualization of the directed subdifferential).

Hence, we can give an equivalent definition of the directed subdifferential via directional derivatives for both cases (DC and quasidifferentiable functions). Moreover, it appears that  in contrast to the original definitions of the directed subdifferential in \cite{BaierFarkhiDC,BaiFarRosh2012a}, these results lead to a new definition of the directed subdifferential for
an extended class of functions wider than DC or QD functions. We first need to define such functions.

\section{Extension of the Directed Subdifferential}
\label{SecDirSubdiff}

In this section, we give a definition of `directed subdifferentiable' functions and show that a directed subdifferential can be defined for such functions even though they are not necessarily DC (and even not quasidifferentiable).

\subsection{Directed Subdifferentiable Functions}\label{SecDefDirSubFunc}
We define the `directed subdifferentiable' functions recursively by considering restrictions of directional derivatives to affine subspaces in a recursive way.
\begin{definition}\label{def:dsf}{\bf (directed subdifferentiable function)} Given a constant $M\geq 0$, a function $f:\R \to \R$ is called $M$-directed subdifferentiable at $x\in \R$, iff both its left- and right-sided derivatives exist and
$$
   \max \{|f'(x;-1)|,|f'(x;1)|\}\leq M \;.
$$
A function $f:\R^n\to \R$, $n\ge 2$ is called $M$-directed subdifferentiable at $x\in \R^n$, iff its directional derivative $f'(x;l)$ exists for every $l$, is continuous as a function of the direction $l$, is bounded by $M$:
$$
\max_{l\in \Sn} |f'(x;l)|\leq M \;,
$$
and the restriction
$f_l(\cdot) = f'(x;l+\Prj\transp_{n-1,l}(\cdot)):\R^{n-1}\to \R$ of its directional derivative to $l+\myspan \{l\}^\perp$ is also $M$-directed subdifferentiable at $0_{n-1}$ for all $l\in \Sn$.

We say that a function is {\em directed subdifferentiable}, iff it is $M$-directed subdifferentiable for some $M\geq 0$.
\end{definition}

Observe that the restrictions $f_l$ are constructed in the same way as for convex and DC functions above (also see Fig.~\ref{fig:Example1SecondMethod} for an illustration).

\begin{remark}Even though the above definition might look cumbersome, the directed subdifferentiable functions are very common in applications. All Lipschitz functions definable on $o$-minimal structure (which include semi-algebraic functions \cite{DaniilidisPang,Iof2008}) are directed subdifferentiable. Even though this fact is fairly obvious for anybody sufficiently familiar with definable functions, we provide a sketch of the proof for convenience in Lemma~\ref{lem:definable}. We would like to thank Aris Daniilidis and Dmitry Drusviatskiy for providing us with helpful hints for the proof.
\end{remark}

\begin{remark} We would like to mention that the study of definable functions and their generalized subdifferentials has recently drawn much attention (e.g.\ see \ch{\cite{DaniilidisPang,BolteDanLewTame,DrusIoffeLewis})}. The class of definable functions is very broad, and  vast majority of functions encountered in applications belongs to this class. This restriction helps focussing on results crucial for optimization theory \cite{Iof2008} without being distracted by abnormal behaviour of exotic examples.
\end{remark}

We next show that locally Lipschitz definable functions are directed differentiable. We first provide the definition of functions defined on $o$-minimal structure and that of tame functions. We follow \cite{BolteDanLewTame}.

\begin{definition}[o-minimal structure] 
%%% An o-minimal structure on
%%% $(\R,+, .)$ is a sequence of Boolean algebras ${\cal O} = \{{\cal O}_n\}$ of ``definable'' subsets of $\R^n$, such that for each $n\in \N$
\sloppy{A sequence of Boolean algebras ${{\cal O} = \{{\cal O}_n\}}$ of \emph{definable} subsets of $\R^n$
is called an \emph{o-minimal structure} on $(\R,+, .)$, iff 
for each $n\in \N$}
\begin{itemize}
\item[(i)] if $A \in {\cal O}_n$, then $A\times \R$, $\R\times A \in {\cal O}_{n+1}$;
\item[(ii)] if $\Pi:\R^{n+1}\to \R^n$ is the canonical projection onto $\R^n$ then for any $A \in {\cal O}_{n+1}$, the set $\Pi(A)$ fulfills $\Pi(A) \in {\cal O}_n$;
\item[(iii)] 
             every set of the form $\{x\in \R^n\,:\, p(x) = 0\}$, where $p:\R^n\to \R$ is a polynomial function, belongs to ${\cal O}_{n+1}$, i.e.~${\cal O}_n$ contains the family of algebraic 
             subsets of $\R^n$; 
\item[(iv)] ${\cal O}_1$ consists exactly of finite unions of intervals and points.
\end{itemize}

A function $f:S\subset \R^n\to \R^m$ is said to be \emph{definable} in $\cal O$, iff its graph (as a subset of $\R^n\times \R^m$) is definable in $\cal O$.
\end{definition}

\begin{definition} A subset $A \subset \R^n$ is called {\em tame}, iff for every $r > 0$ there exists an o-minimal structure ${\cal O}$ over $\R$ such that the intersection $A \cap [-r,r]^n$ is definable in $\cal O$. Similarly a function $f: U \subset \R^n \to \R^m$ is called \emph{tame}, iff its graph (as a subset of $\R^n\times \R^m)$ is tame.
\end{definition}

One of the key results in the theory of definable functions is the following monotonicity theorem (see \cite[Chap.~5]{BOCHN/COS/ROY:1987} and \cite[Chap.~2]{Coste}).

\begin{theorem}[Monotonicity theorem]\label{thm:monot} 
Let $f : \ch{]a, b[} \to \R$ be a definable function. There exists a finite subdivision $a = a_0 < a_1 < \dots < a_k = b$ such that, on each interval $\ch{]a_i, a_{i+1}[}$, $f$ is continuous and either constant or strictly
monotone.
\end{theorem}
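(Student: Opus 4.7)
The plan rests on axiom~(iv), which guarantees that every definable subset of $\R$ is a finite union of points and open intervals; this is essentially the only tool required, with the rest amounting to arranging that the sets classifying the local behavior of $f$ are themselves definable. Concretely, I would introduce the four sets
\[
   E_c,\; E_+,\; E_-,\; E_d \;\subseteq\; ]a,b[
\]
consisting of points of $]a,b[$ that admit an open neighborhood on which $f$ is, respectively, constant, strictly increasing, strictly decreasing, or none of these. Each is defined by a first-order formula over the definable graph of $f$, hence is definable. The theorem then reduces to showing that $E_d$ is finite, together with a gluing argument that upgrades the local behavior on $E_c \cup E_+ \cup E_-$ to the corresponding global behavior on each open subinterval of the induced partition.

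The crux is to show that $E_d$ has empty interior. Assume otherwise, so that some open subinterval $J$ is contained in $E_d$, and fix $x \in J$. The three definable subsets
\[
   \{y \in J : f(y) > f(x)\},\;\; \{y \in J : f(y) < f(x)\},\;\; \{y \in J : f(y) = f(x)\}
\]
partition $J \setminus \{x\}$, and by axiom~(iv) each is a finite union of points and intervals; hence on some one-sided neighborhood of $x$ exactly one of the three relations holds. Iterating this observation as $x$ varies in a small subinterval and combining the left- and right-sided information should produce a point of $J$ lying in $E_c \cup E_+ \cup E_-$, contradicting $J \subseteq E_d$. Once $E_d$ is finite, a second application of axiom~(iv) decomposes $]a,b[\,\setminus E_d$ into finitely many open intervals on each of which $f$ is locally constant, locally strictly increasing, or locally strictly decreasing, and an elementary real-analytic argument promotes each of these local properties to the corresponding global property on the whole interval.

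Continuity is handled by one more application of the same strategy: on each surviving open interval $K$, the set of discontinuity points of $f|_K$ is definable (via the quantified $\varepsilon$--$\delta$ characterization) and, by an argument analogous to the one above, has empty interior, hence is finite by axiom~(iv). Adjoining these finitely many extra breakpoints to the subdivision yields the desired partition.

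The main obstacle is the localization step described above: axiom~(iv) by itself only says that near $x$ one single one of the three relations holds on \emph{some} side, and both the dominant relation and the preferred side may a priori change as $x$ moves through $J$. To convert the pointwise one-sided information into the existence of a single point at which $f$ is actually locally constant or monotone, one needs a careful matching of left- and right-sided behavior that is uniform in $x$ over a nondegenerate subinterval. This is precisely the short technical lemma on which the classical treatments in \cite{Coste,BOCHN/COS/ROY:1987} rest, and it is the one place where an induction on the quantifier complexity of the defining formulas appears unavoidable.
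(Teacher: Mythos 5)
The paper does not prove this statement at all: Theorem~\ref{thm:monot} is quoted as a classical result of o-minimal geometry, with the proof delegated to \cite[Chap.~5]{BOCHN/COS/ROY:1987} and \cite[Chap.~2]{Coste}. So there is no in-paper argument to compare against; the only question is whether your sketch would stand on its own, and as written it does not. The overall architecture is the right one (it is essentially the standard proof: classify points by local behavior, show the ``bad'' set $E_d$ is finite via o-minimality, then globalize), and the easy steps are handled correctly --- definability of $E_c, E_+, E_-, E_d$, the fact that a definable subset of $\R$ with empty interior is finite, and the one-sided trichotomy at a fixed $x$ obtained by applying axiom~(iv) to the three sets $\{y: f(y)\gtrless f(x)\}$ and $\{y: f(y)=f(x)\}$.

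The genuine gap is exactly where you flag it, and it is not a ``short technical lemma'' but the entire content of the theorem. Knowing that for each $x$ some single relation holds on some one-sided neighborhood does not by itself produce a point where $f$ is locally constant or locally strictly monotone: the dominant relation compares $f(y)$ only to $f(x)$, not $f(y_1)$ to $f(y_2)$ for nearby $y_1,y_2$, and both the relation and the side can vary with $x$. The standard repair (van den Dries, Coste) is a separate chain of lemmas: first, every definable $f$ on an interval is constant or injective on a subinterval; second, a definable injective $f$ is strictly monotone on a subinterval (this uses o-minimality applied to definable subsets of the plane sliced by fixing one variable, e.g.\ $\{(x,y): x<y,\ f(x)<f(y)\}$); third, a definable monotone $f$ is continuous on a subinterval (via definability of the image). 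Your sketch compresses all of this into ``iterating this observation\dots should produce a point,'' which is precisely the step one cannot wave at. Also, your closing remark that an induction on quantifier complexity ``appears unavoidable'' is off the mark: axiom~(ii) closes the structure under projections, so o-minimality applies to definable sets of arbitrary quantifier complexity, and the classical proofs use no such induction. To make the proposal a proof you would need to state and prove the injectivity-to-monotonicity lemma explicitly; everything else in your outline then goes through.
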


The fact that locally Lipschitz definable functions are directed subdifferentiable follows from the next lemma. One needs to apply it recursively $n$ times to the directional derivatives of directional derivatives to obtain the desired result. We only provide a sketch of the proof, as it is not the core result of the paper, and, as we mentioned before, the proof is evident for anybody familiar with o-minimal structures \cite{Coste,Iof2008}. To generalize the result to tame functions, observe that a Lipschitz tame function is definable in a neighborhood of each point, hence, the proof carries over to tame functions.

\begin{lemma}\label{lem:definable} Let $f:\R^n\to \R$ be locally Lipschitz around $x\in \R^n$ and definable on some o-minimal structure. Then the directional derivative $f'(x;\cdot)$ of $f$ at $x$ exists, is globally Lipschitz and definable on the whole space $\R^n$.
\end{lemma}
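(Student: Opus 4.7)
The plan is to handle the three assertions—existence of $f'(x;\cdot)$, definability, and global Lipschitz continuity—in that order, using the Monotonicity Theorem as the essential tool for existence and the first-order closure properties of o-minimal structures for definability.

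First I would fix a direction $l \in \R^n$ and consider the one-variable function
\[
   \phi_l(t) \;:=\; \frac{f(x+tl)-f(x)}{t} \qquad (t > 0).
\]
Since $f$ is definable, $\phi_l$ is definable (graphs of definable functions are stable under the arithmetic operations and substitution of definable maps, all of which correspond to projections of definable sets). By the Monotonicity Theorem~\ref{thm:monot} applied to $\phi_l$ on some interval $\,]0,r[\,$, there exists $0 < \varepsilon \leq r$ such that $\phi_l$ is continuous and either constant or strictly monotone on $\,]0,\varepsilon[\,$. Local Lipschitz continuity of $f$ around $x$ with constant $L$ gives $|\phi_l(t)|\leq L\|l\|$ for sufficiently small $t>0$, so $\phi_l$ is bounded there. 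A bounded monotone (or constant) function has a limit at the endpoint, hence $f'(x;l) = \lim_{t\downarrow 0}\phi_l(t)$ exists.

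Next I would argue definability of the function $l \mapsto f'(x;l)$. The graph
\[
   \bigl\{(l,v)\in\R^n\times\R : f'(x;l)=v\bigr\}
\]
is described by the first-order formula
\[
   \forall\,\varepsilon>0\ \exists\,\delta>0\ \forall\,t\in\,]0,\delta[\,:\quad
   \Bigl|\tfrac{f(x+tl)-f(x)}{t}-v\Bigr|<\varepsilon,
\]
which, because definable sets form a Boolean algebra closed under projections (axioms~(i)--(ii) of an o-minimal structure), defines a set in $\mathcal{O}_{n+1}$. Hence $f'(x;\cdot)$ is definable on $\R^n$.

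Finally, for the global Lipschitz property, I would exploit the positive homogeneity of the directional derivative: for any $\lambda>0$, $f'(x;\lambda l) = \lambda f'(x;l)$. Pick $r>0$ so that $f$ is $L$-Lipschitz on a ball of radius $2r$ around $x$. Given arbitrary $l_1,l_2\in\R^n$, choose $\lambda>0$ small enough that $x+t\lambda l_i \in B_{2r}(x)$ for all small $t>0$. Then
\[
   \left|\tfrac{f(x+t\lambda l_1)-f(x+t\lambda l_2)}{t}\right|\leq L\,\lambda\,\|l_1-l_2\|,
\]
and passing to the limit $t\downarrow 0$ gives $|f'(x;\lambda l_1)-f'(x;\lambda l_2)|\leq L\lambda\|l_1-l_2\|$. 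Positive homogeneity then cancels $\lambda$, yielding $|f'(x;l_1)-f'(x;l_2)|\leq L\|l_1-l_2\|$ for all $l_1,l_2\in\R^n$. Continuity of $f'(x;\cdot)$ on $\R^n$ is an immediate byproduct.

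The main obstacle—which is why only a sketch is intended—is being careful that $\phi_l$ is indeed definable as a function of both $l$ and $t$ so that the monotonicity argument and subsequent first-order construction go through uniformly; this rests on the standard fact that definability is preserved by arithmetic operations and substitutions, and is the reason the authors refer the reader to the general o-minimal literature \cite{Coste,Iof2008} rather than reproducing these technicalities.
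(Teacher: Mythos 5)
Your proposal is correct and follows essentially the same route as the paper: existence of $f'(x;l)$ via the Monotonicity Theorem~\ref{thm:monot} applied to the definable difference quotient together with the bound coming from the local Lipschitz constant, and definability of $l\mapsto f'(x;l)$ from the closure properties of the o-minimal structure. The only real difference is in how that last step is packaged: you invoke first-order definability of the graph directly (the $\forall\varepsilon\,\exists\delta\,\forall t$ formula, justified by the Boolean algebra and projection axioms), whereas the paper makes the same quantifier manipulation concrete by writing the limit as $\sup_{\varepsilon}\inf_{t} g_x(y,\min\{t,\varepsilon\})$ and citing the definability of inf-functions from \cite{Iof2008}; the two arguments are equivalent, with the paper's version simply exhibiting the projections explicitly. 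You also spell out the global Lipschitz estimate via positive homogeneity of the directional derivative, a step the paper only asserts, so on that point your sketch is actually more complete than the published one.
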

\begin{proof}
\sloppy{The existence of the directional derivative follows from the monotonicity theorem. It is a trivial exercise to show that the function ${g_{x}(y,t):\R^n\times \ch{]0,\infty[}\to \R}$,}
$$
g_{x}(y,t) \chR{:=} \frac{f(x+ty)-f(x)}{t}
$$
is definable (it is a composition of definable functions). The monotonicity theorem implies that for every $y\in \R^n$ there exists $r_1>0$ such that $g_x$ is monotone on $y\times \ch{]0,r_1[}$. Moreover, since $f$ is locally Lipschitz around $x$, there is an $r_2>0$ such that $g$ is bounded on $\{y\}\times \ch{]0, r_2[}$. Hence the limit
$$
f'(x;y) = \lim_{t\downarrow 0} g_{x}(y,t)
$$
exists for every $y\in \R^n$.

The Lipschitzness of the directional derivative follows from the local Lipschitzness of the function $f$ around $x$.

%%% !!! fill word "now" to avoid big horizontal gaps
\sloppy{It remains to show \chR{now} that the directional derivative $h_x:\R^n\to \R$, ${h_x(y) \chR{ := f'}(x;y)}$ is definable. We use the argument from \cite{Iof2008}. For any definable $\psi:\R^n\times A \to \R$, where $A\in \R^p$ is a definable set, the inf function}
$$
\varphi(x) \chR{:=} \inf_{s\in A} \psi(x,s)
$$
is definable (this result is not difficult to prove using the standard technique described, e.g.~in \cite[proof of Proposition~1.2]{Coste}. Observe that
\begin{align*}
h_x(y) = \chR{f'}(x;y)
& = \lim_{t\downarrow 0} g_{x}(y,t) \\
& = \sup_{\varepsilon \in \ch{]0,r_1[}}\inf_{t\in {]0,\varepsilon[}}g_x(y,t)\\
&  = \sup_{\varepsilon \in {]0,r_1[}}\inf_{t\in {]0,+\infty[}}g_x(y,\min\{t,\varepsilon\})\\
&  = - \inf_{\varepsilon \in {]0,r_1[}}[-\inf_{t\in {]0,+\infty[}}g_x(y,\min\{t,\varepsilon\})].
\end{align*}
The function $g_x(y,\min\{t,\varepsilon\}):\R^n\times \ch{]0,+\infty[}\times \ch{]0,+\infty[}$ is definable, hence, $h_x(y)$ is definable as an inf\chR{-}function.
\end{proof}

We next show that arithmetic operations on directed subdifferentiable functions as well as pointwise min and max operations again yield directed subdifferentiable functions.

\begin{lemma}\label{lem:MCalc} Let $f_i:\R^n\to \R$ be $M_i$-directed subdifferentiable at $x$ for $i=1,2$ and $\alpha,\beta \in \R$.
Then
\begin{itemize}
\item[(i)]{ $\alpha f_1 + \beta f_2$ is $|\alpha| M_1+ |\beta| M_2$-directed subdifferentiable at $x$,}
\item[(ii)]{ $f_1\cdot f_2$ is $M$-directed subdifferentiable at $x$ with
$$
M \chR{:=} M_1|f_2(x)|+M_2|f_1(x)| \;.
$$
           }
\item[(iii)]{ $f_1/f_2$ is $M$-directed subdifferentiable at $x$ with
$$
M \chR{:=}  \frac{M_1|f_2(x)|+M_2 |f_1(x)|}{|f_2(x)|^2} \;,
$$
              if $f_2(x)\neq 0$.
            }
\end{itemize}
\end{lemma}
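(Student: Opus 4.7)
The plan is to prove all three statements simultaneously by induction on the dimension $n$, since the definition of $M$-directed subdifferentiability is itself recursive in $n$, and because the inductive step for (ii) and (iii) will actually reduce to the inductive hypothesis for (i).

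\emph{Base case $n=1$.} I would just apply Remark~\ref{rem:DirDer} to express the one-sided derivatives of $\alpha f_1 + \beta f_2$, $f_1 f_2$, and $f_1/f_2$ at $x$ in directions $\pm 1$ in terms of $f_i'(x;\pm 1)$ and the values $f_i(x)$. The stated Lipschitz-type bounds on $M$ then drop out from the triangle inequality. For instance, $|(\alpha f_1 + \beta f_2)'(x;\pm 1)| \le |\alpha|M_1 + |\beta|M_2$, and for (ii), $|(f_1 f_2)'(x;\pm 1)| = |f_1(x) f_2'(x;\pm 1) + f_2(x) f_1'(x;\pm 1)| \le |f_1(x)|M_2 + |f_2(x)|M_1$.

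\emph{Inductive step for $n \geq 2$.} Assume the three statements hold in dimension $n-1$. For each combined function $F \in \{\alpha f_1 + \beta f_2,\, f_1 f_2,\, f_1/f_2\}$, Remark~\ref{rem:DirDer} gives the directional derivative $F'(x;l)$ as an algebraic expression in $f_1'(x;l)$, $f_2'(x;l)$ and the constants $f_1(x), f_2(x)$. Since $f_1'(x;\cdot)$ and $f_2'(x;\cdot)$ are continuous on $\Sn$ (in the quotient case $f_2(x)\neq 0$ so no singularity arises), so is $F'(x;\cdot)$. The bound $\max_{l\in\Sn}|F'(x;l)|\le M$ with the stated value of $M$ is again an immediate triangle-inequality computation using $|f_i'(x;l)|\le M_i$.

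\emph{The restriction step, which is the crux.} It remains to check that $F_l(\cdot) = F'(x;l + \Prj_{n-1,l}^\top(\cdot))$ is $M$-directed subdifferentiable at $0_{n-1}$. Plugging $l + \Prj_{n-1,l}^\top y$ into the Remark~\ref{rem:DirDer} formulas and recognizing $(f_i)_l(y) = f_i'(x; l + \Prj_{n-1,l}^\top y)$ yields
\begin{align*}
  (\alpha f_1 + \beta f_2)_l(y) &= \alpha (f_1)_l(y) + \beta (f_2)_l(y), \\
  (f_1 f_2)_l(y)                &= f_1(x)\,(f_2)_l(y) + f_2(x)\,(f_1)_l(y), \\
  (f_1/f_2)_l(y)                &= \frac{f_2(x)\,(f_1)_l(y) - f_1(x)\,(f_2)_l(y)}{[f_2(x)]^2}.
\end{align*}
The key observation is that in all three cases $F_l$ is an \emph{affine} combination of $(f_1)_l$ and $(f_2)_l$ with scalar coefficients depending only on the fixed point $x$ (not on $y$). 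By the inductive hypothesis, $(f_1)_l$ and $(f_2)_l$ are $M_1$- and $M_2$-directed subdifferentiable at $0_{n-1}$, respectively, so the inductive statement~(i) applied in dimension $n-1$ shows that each $F_l$ is directed subdifferentiable at $0_{n-1}$ with precisely the constant $M$ advertised in (i), (ii), (iii) respectively. Combined with the boundedness and continuity of $F'(x;\cdot)$ established above, this establishes the three statements in dimension $n$ and closes the induction.

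The main obstacle is the conceptual one of setting up the induction correctly: although statements (ii) and (iii) are about products and quotients at the top level, at the level of restrictions they reduce to linear combinations, so the three clauses must be proved jointly, with clause (i) of the inductive hypothesis feeding into the proofs of (ii) and (iii) in the next dimension. Once this is recognized, the actual computations are routine applications of Remark~\ref{rem:DirDer} and the triangle inequality.
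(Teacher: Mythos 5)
Your proof is correct and follows essentially the same route as the paper: compute the directional derivative of the combined function via Remark~\ref{rem:DirDer}, bound it by the triangle inequality, and observe that the restriction $F_l$ is a linear combination of $(f_1)_l$ and $(f_2)_l$ with coefficients fixed by $x$, so the bound propagates down the recursion. The only difference is presentational — you organize the descent as a formal joint induction on $n$ with clause~(i) feeding the inductive step for (ii) and (iii), whereas the paper simply iterates the same estimate ``until dimension~1''; your version is a slightly cleaner formalization of the identical idea.
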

\begin{proof}
Let $f_1,f_2:\R^n \to \R$ be $M_1$ and $M_2$-directed subdifferentiable respectively, $x\in \R^n$ and let $f \chR{:=} \alpha f_1+ \beta f_2$. Then from Remark~\ref{rem:DirDer}
\begin{equation}\label{eq:ddSum01}
f'(x;l) = \alpha f'_1(x;l)+ \beta f'_2(x;l) \quad \forall l\in \R^n \;,
\end{equation}
and therefore
 \begin{align*}
    \max_{l\in \Sn}\left|f'(x;l) \right| & \leq |\alpha| \max_{l\in \Sn}\left|f'_1(x;l)\right|+ |\beta| \max_{l\in \Sn}\left|f'_2(x;l) \right| \\
    & \leq M := |\alpha| M_1+ |\beta| M_2 \;.
 \end{align*}
Moreover, \eqref{eq:ddSum01} yields
$$
f_l(\cdot) = \alpha (f_1)_l(\cdot)+ \beta (f_2)_l(\cdot) \;.
$$
Since both $(f_1)_l(\cdot)$ and $(f_2)_l(\cdot)$ are directionally differentiable, by the same argument as above we have
$$
\max_{y\in \Snminus}\left|f_l'(0;y)\right| \leq M = |\alpha| M_1+ |\beta| M_2\quad \forall l\in \Sn \;.
$$
Repeating the same argument over and over again until we reach the relevant one-dimensional restrictions, we prove that $f$ is $M$-directed subdifferentiable.

The proof for the rest of the cases is analogous, except for several details that we highlight below.

Observe that for the product $f \chR{:=}f_1\cdot f_2$ of two directed subdifferentiable functions we have from Remark~\ref{rem:DirDer}
\begin{align*}
\sup_{l\in \Sn}\left|f'(x;l)\right|
 & \leq |f_1(x)|\sup_{l\in \Sn}\left|f'_2(x;l)\right| +|f_2(x)|\sup_{l\in \Sn}\left|f'_1(x;l)\right|\\
 & \leq |f_1(x)|M_1+|f_2(x)|M_2 \;;
\end{align*}
as we go one level down we get for each $l\in \Sn$
\begin{align*}
\sup_{y\in \Snminus}\left|f'_l(0;y)\right|
 & = \sup_{y\in \Snminus}\left|f_1(x)(f_1)'_l(0;y)+ f_1(x)(f_2)'_l(0;y)\right|\\
 & \leq |f_1(x)|M_1+|f_2(x)|M_2 \;;
\end{align*}
and so on until dimension~1. The derivation is analogous for the ratio.
\end{proof}

It can be shown with Remark~\ref{rem:DirDerMaxMin} that the pointwise maximum and minimum functions are directed subdifferentiable.
\begin{lemma}\label{lem:MCalcMaxMin} Let $f_i:\R^n\to \R$ be $M_i$-directed subdifferentiable at $x$ for $i\in I$, where $I$
is a finite index set.
Then the pointwise maximum function
$$
f_{\max}(x) \chR{:=} \max_{i\in 1:p}f_i(x)
$$
is $M_{\max}$-directed subdifferentiable at $x$ with
$$
M_{\max} \chR{:=} \max_{i\in I(x)}|M_i|, \text{ with }  I(x) \chR{:=} \{i\in I\,|\, f_i(x) = f_{\max}(x)\} \;,
$$
and likewise the pointwise minimum function
$$
f_{\min}(x) \chR{:=} \min_{i\in 1:p}f_i(x)
$$
is $M_{\min}$-directed subdifferentiable with
$$
M_{\min} \chR{:=} \max_{i\in J(x)}|M_i|, \text{ with }  J(x) \chR{:=} \{i\in J\,|\, f_i(x) = f_{\min}(x)\} \;.
$$
\end{lemma}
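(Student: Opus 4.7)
The plan is to induct on the dimension $n$, mirroring the recursive structure of Definition~\ref{def:dsf}, with Remark~\ref{rem:DirDerMaxMin} supplying the key identity at each level. Before starting, I would record one auxiliary monotonicity fact: if $f$ is $M$-directed subdifferentiable at $x$ and $M' \geq M$, then $f$ is also $M'$-directed subdifferentiable at $x$. This is immediate from Definition~\ref{def:dsf} by a short induction on $n$, since only upper bounds on $|f'(x;\cdot)|$ and on the constants for the lower-dimensional restrictions enter, and enlarging a bound preserves it.

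The base case $n=1$ follows directly from Remark~\ref{rem:DirDerMaxMin}: the one-sided derivatives of $f_{\max}$ at $x$ exist and satisfy $f'_{\max}(x;\pm 1)=\max_{i\in I(x)} f'_i(x;\pm 1)$, hence
\[
  |f'_{\max}(x;\pm 1)| \leq \max_{i\in I(x)} |f'_i(x;\pm 1)| \leq \max_{i\in I(x)} |M_i| = M_{\max}.
\]
For the inductive step at $n \geq 2$ I would first verify the two non-recursive clauses of Definition~\ref{def:dsf}: continuity of $f'_{\max}(x;\cdot)$ on $\Sn$ holds because it is the pointwise maximum of the finitely many continuous functions $f'_i(x;\cdot)$ with $i\in I(x)$, while the bound $|f'_{\max}(x;l)| \leq M_{\max}$ for every $l\in \Sn$ follows from the elementary inequality $|\max_i a_i|\leq \max_i|a_i|$ together with the assumed bounds $|f'_i(x;l)|\leq M_i$.

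The heart of the argument is the recursive clause of the definition. For each $l\in \Sn$ I would substitute the max-formula of Remark~\ref{rem:DirDerMaxMin} into the definition of the restriction $(f_{\max})_l$ to obtain
\[
  (f_{\max})_l(y) = \max_{i\in I(x)} f'_i(x; l+\Prj\transp_{n-1,l}y) = \max_{i\in I(x)}(f_i)_l(y),
\]
so the restriction of $f_{\max}$ at level $n-1$ is itself the pointwise maximum of the restrictions $(f_i)_l$, each of which is $M_i$-directed subdifferentiable at $0_{n-1}$ by the hypothesis on $f_i$. The inductive hypothesis applied in dimension $n-1$ then yields that $(f_{\max})_l$ is $M'$-directed subdifferentiable at $0_{n-1}$ with $M' = \max_{i\in \tilde I(0)}|M_i|$ for the new active set $\tilde I(0) := \{i\in I(x) : (f_i)_l(0)=(f_{\max})_l(0)\} \subset I(x)$. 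Since $M' \leq M_{\max}$, the preliminary monotonicity promotes this to $M_{\max}$-directed subdifferentiability, closing the induction.

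The case of $f_{\min}$ is entirely symmetric, using the min-part of Remark~\ref{rem:DirDerMaxMin} and the same inequality $|\min_i a_i|\leq \max_i|a_i|$. The main subtlety is bookkeeping across levels: the active index set may shrink strictly when passing from $x$ to the restriction's base point $0_{n-1}$, so a literal recursive application produces a constant only bounded above by the target $M_{\max}$, not literally equal to it; this mismatch is exactly what the preliminary monotonicity observation is there to absorb, and is the only place where a naive induction would fail.
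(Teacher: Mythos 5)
Your proof is correct and follows exactly the route the paper indicates: the paper omits this proof, stating only that it is analogous to Lemma~\ref{lem:MCalc} with Remark~\ref{rem:DirDerMaxMin} in place of Remark~\ref{rem:DirDer}, and your recursive/inductive argument is a faithful, careful realization of that strategy. Your preliminary monotonicity observation (that $M$-directed subdifferentiability implies $M'$-directed subdifferentiability for $M'\geq M$) is a genuinely needed detail that the paper's ``analogous'' claim glosses over, since the active index set can shrink when passing to the restrictions $(f_{\max})_l$.
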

We omit the proof of this Lemma, as it is analogous to the proof of Lemma~\ref{lem:MCalc}, using the previous Remark~\ref{rem:DirDerMaxMin}.

\subsection{Extension to Directed Subdifferentiable Functions}

The directed subdifferential is now extended to the class of
directed subdifferentiable functions in which no DC structure for the
function or the directional derivative is present any longer. Instead, we will
base the construction on directional derivatives only.

\begin{definition}\label{def:ddd}{\bf (Second definition of the directed subdifferential via directional derivatives)} Let $f:\R^n\to \R$ be directed subdifferentiable, then for any fixed $x\in \R^n$ the directed subdifferential of $f$ at $x$ can be defined as follows:
\begin{itemize}
  \item[(i)] For $n=1$ let $\dd f(x) \ch{:=} \overrightarrow{[-f'(x;-1);f'(x;1)]}$;
  \item[(ii)] For $n\geq 2$ let
  $$
   \dd f(x) \ch{:=} \left(\dd f_l, f'(x;l)\right)_{l\in S_{n-1}} \;,
  $$
  where $f_l:\R^{n-1}\to \R$ is defined by $f_l(\cdot) \ch{:=} f'(x;l+\Prj\transp_{n-1,l}(\cdot))$.
\end{itemize}
\end{definition}

The next statement shows that the directed subdifferential is a well-defined
directed set for directed subdifferentiable functions.

\begin{lemma}\label{lem:DefCorrect} Let  $x\in \R^n$ and assume $f:\R^n\to \R$ is directed subdifferentiable. Then the object described by Definition~\ref{def:ddd} is a directed set. Besides, if $f$ is $M$-directed subdifferentiable, then
\begin{equation}\label{eq:DefCorrect001}
\|\dd f(x)\| = \inf \{M\,|\, f \text{ is } M\text{-directed subdifferentiable at } x\} \;,
\end{equation}
moreover, the infimum is attained.
\end{lemma}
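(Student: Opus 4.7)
The plan is to prove all three claims (directedness, the norm identity, and attainment of the infimum) simultaneously by induction on the dimension $n$, since both Definition~\ref{def:dsf} and Definition~\ref{def:ddd} are recursive and track identical data at each level.

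For the base case $n = 1$, the object $\overrightarrow{[-f'(x;-1), f'(x;1)]}$ is by construction a directed interval, hence a directed set in $\Done$; its norm reads $\max\{|f'(x;-1)|, |f'(x;1)|\}$, which is precisely the smallest $M \ge 0$ satisfying the inequality $\max\{|f'(x;-1)|, |f'(x;1)|\} \le M$ from Definition~\ref{def:dsf}. So all three conclusions hold trivially and the infimum is attained by this value.

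For the inductive step with $n \ge 2$, I would first check the two conditions demanded by the definition of a directed set in $\R^n$. Continuity of the scalar component $l \mapsto f'(x; l)$ is built into Definition~\ref{def:dsf}. For the lower-dimensional map $l \mapsto \dd f_l$, I would invoke the induction hypothesis on each restriction $f_l$: since $f$ is $M$-directed subdifferentiable at $x$, every $f_l$ is $M$-directed subdifferentiable at $0_{n-1}$, so $\dd f_l$ is a genuine element of $\Dnminus$ and the induction delivers $\|\dd f_l\|_{n-1} \le M$ uniformly in $l$, which is exactly the uniform boundedness required.

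To obtain the norm identity, let $M^\ast := \inf\{M \ge 0 : f \text{ is } M\text{-directed subdifferentiable at } x\}$. The inequality $\|\dd f(x)\|_n \le M^\ast$ would follow immediately from the recursive norm formula together with the uniform bounds above applied to any admissible $M$. For the reverse direction, I would set $\tilde M := \|\dd f(x)\|_n$; the norm formula then yields $|f'(x;l)| \le \tilde M$ and $\|\dd f_l\|_{n-1} \le \tilde M$ for every $l \in \Sn$. Invoking the induction hypothesis in its stronger \emph{attained} form, each $f_l$ is $\|\dd f_l\|_{n-1}$-directed subdifferentiable, hence $\tilde M$-directed subdifferentiable at $0_{n-1}$. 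Combined with the pointwise bound on $|f'(x;l)|$, this lifts to $\tilde M$-directed subdifferentiability of $f$ at $x$, giving both $M^\ast \le \tilde M$ and attainment of the infimum at $\tilde M$.

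The main obstacle I anticipate is the tight coupling between the norm identity and the attainment clause in the induction: the attainment in dimension $n-1$ is precisely what converts the norm inequality $\|\dd f_l\|_{n-1} \le \tilde M$ back into $\tilde M$-directed subdifferentiability of $f_l$, which in turn is what enables the lift to $f$. Without carrying attainment through the induction, only $M^\ast \le \tilde M + \varepsilon$ would be available, and one would fall short of both equality and the attainment conclusion.
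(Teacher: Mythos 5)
Your proposal is correct and follows essentially the same route as the paper: induction on the dimension, using the recursive structure of Definitions~\ref{def:dsf} and~\ref{def:ddd} to verify directedness and the bound $\|\dd f(x)\|\le M$ for every admissible $M$. The only difference is that you spell out the reverse inequality and the attainment by carrying the attained form of the induction hypothesis through each level, whereas the paper dismisses this part as self-evident; your more explicit treatment is a welcome strengthening rather than a deviation.
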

\begin{proof} For $n=1$ the statement is trivial: indeed, given $f:\R\to \R$ such that $f$ is $M$-directed subdifferentiable at $x\in \R$, we have
$$
\dd f(x) = \overrightarrow{[-f'(x;-1);f'(x;1)]} \;.
$$
Clearly, $\|\dd f(x)\| = \max\{|f'(x;-1)|, |f'(x,1)|\}\leq M$.

Assume now that the statement is true for $n=k-1$. We will prove it for $n=k$. Consider $f:\R^k\to \R$ such that $f$ is $M$-directed subdifferentiable at $x\in \R^k$.
By Definition~\ref{def:ddd} of the directed subdifferential
$$
\dd f(x) = \left(\dd f_l, f'(x;l)\right)_{l\in \Snvar{k}} \;.
$$

Observe that for every $l\in \Snvar{k}$ the function $f_l$ is $M$-directed subdifferentiable, and hence by the induction assumption
$\dd f_l$ is a directed set, moreover,
\begin{equation}\label{eq:DefCorrect002}
\|\dd f_l \|\leq M \qquad \forall l\in \Snvar{k} \;,
\end{equation}
and hence the first component of $\dd f_l$ in \eqref{eq:DefCorrect001} is a bounded map from $\Snvar{k}$ to $\mathcal{D}(\R^{k-1})$. Further, since $f$ is $M$-directed subdifferentiable,
its directional derivative $f'(x;\cdot)$ is continuous and bounded by $M$ on the unit sphere:
\begin{equation}\label{eq:DefCorrect003}
\sup_{l\in \Snvar{k}} |f'(x;l)| \leq M \;.
\end{equation}
From \eqref{eq:DefCorrect002} and \eqref{eq:DefCorrect003} we have $\|\dd f(x)\|\leq M$.

It is self-evident from the definition of an $M$-directed subdifferentiable function that the infimum in \eqref{eq:DefCorrect001} is attained.
\end{proof}

We have already shown in Corollary~\ref{cor:dir_subd_qd} that Definition~\ref{def:ddd} extends the directed subdifferential for  quasidifferentiable functions introduced in \cite{BaiFarRosh2012a} and restated in Definition~\ref{Def_Dir_subdif_QD}.

We would like to point out that the difficulty to construct the `max+min' representation for the directional derivative has been repelling people from applying quasidifferentials to solve nonsmooth optimization problems (apart from the problem of non-uniqueness and `inflation' after applying calculus rules already addressed in \cite{BaiFarRosh2012b}). On the other hand, the excellent calculus rules of the quasidifferential, e.g.~the exact sum rule, are often not taken into account. Therefore, the QD representation can be gained by applying the exact calculus rules. Assuming that the function consists of (nonsmooth) outer operations like `max or min' and of (differentiable) simple inner functions, one can iterate the QD representation formulas from the inner functions to end up with the QD representation of the (complicated) outer functions. This idea (similar to the ideas in algorithmic/automatic differentiation) can also be applied for the directed subdifferential.
While the directed subdifferential is essentially based on the same idea as quasidifferential, our approach solves the fundamental problem of constructing the DC representation of the directional derivative --- we simply do not need it, as we only utilize directional derivatives in our construction.

\subsection{The Class of Directed Subdifferentiable Functions}\label{SecClassOfDirSubFunc}

As we have mentioned before, the class of directed subdifferentiable functions is rather large and includes many important and well-studied classes of functions. For the convenience of the reader, we remind the definitions of (strongly) amenable and lower- and upper-$\Cfvar{k}$  functions first.

The following definition of amenable functions is taken from
\cite[Definition~10.23]{RockWets1998}
and \cite[Sec.~3.2, remarks before Corollary~3.76]{MordukBookI2006}.

\begin{definition}
\label{Def_amenable}
   The function $f: \R^n \rightarrow \R \cup \{ \pm \infty \}$
   is called \emph{amenable}, iff it has the form
   $f = g \circ \varphi$ with $g: \R^m \to \R \cup \{ \pm \infty \}$ proper, l.s.c., convex
   and $\varphi \in C^1(\R^n; \R^m)$ such that the following constraint qualification
   holds. Whenever we have the equation
    \begin{align*}
       \JacMat{\varphi}{x}^\trans y = 0 \quad\text{for some $y \in N_D(\varphi(x))$} \,,
    \end{align*}
   in a neighborhood of $x$, then this vector $y$ must be zero. \\
   Here,
   $\JacMat{\varphi}{x}$ denotes the Jacobian of $\varphi$ at $x$ and
   $N_D(z)$ is the normal cone at $z = \varphi(x)$
   (see \cite[Definition~6.3]{RockWets1998}) to the closure of the effective domain of $g$,
   i.e.~$D \chR{:=} \cl({\rm dom\,} g)$.
   An amenable function $f: \R^n \rightarrow \R$ is called \emph{strongly
   amenable}, iff $\varphi \in C^2$.
\end{definition}

We cite the definition lower-$C^k$ functions from \cite[(1.6)]{Rock1982} and \cite[Definition~10.29]{RockWets1998}; upper-$C^k$ functions can be defined in a symmetric way.

\begin{definition}
\label{Def_lower_Ck}
   The function $f: \R^n \rightarrow \R $
   is called \emph{lower-$C^k$ with $k \in \N \cup \{\infty\}$},
   \chR{iff} it has the form
    \begin{align*}
       f(x) & = \sup_{p \in P} F(x, p) \quad(x \in \R^n) \,,
    \end{align*}
   where $P$ is a compact topological space and $F: \R^n \times P \to \R$
   is a function that has partial derivatives w.r.t.~$x$ up to order $k$
   with both $F$ and these derivatives (jointly) continuous w.r.t.~$(x,p)$.
\end{definition}

We state in the following remark a list of function classes that are
directed subdifferentiable.

\begin{remark}\label{rem:dir_subdiff}
   The following types of functions $f: \R^n \to \R$ are directed
   subdifferentiable:
    \begin{itemize}
      \item[(i)] convex functions
      \item[(ii)] DC functions
      \item[(iii)] quasidifferentiable functions
      \item[(iv)] amenable (and strongly amenable) functions
      \item[(v)] lower-$\Cfvar{k}$ and upper-$\Cfvar{k}$ functions for $k \in \N$
    \end{itemize}
   It is well-known that DC functions are quasidifferentiable and that lower-$\Cfvar{k}$ 
   and upper-$\Cfvar{k}$ functions are DC for $k \geq 2$ (see \cite[Theorem~6]{Rock1982}).  
   The first three classes belong to the class of directed subdifferentiable
   functions by~\cite[Lemma~3.4]{BaiFarRosh2012a} which states that
   a directional derivative of a quasidifferentiable function is also
   quasidifferentiable.

   (iv) follows from~\cite[Corollary~5.8]{BaiFarRosh2012a}, while
   (v) follows from~\cite[Corollary~5.4]{BaiFarRosh2012a} for $k \geq 2$
   and from~\cite[Proposition~5.5]{BaiFarRosh2012a} for $k = 1$.
   In both cases it was shown that the function classes (iv)--(v), see the references
   in~\cite{BaiFarRosh2012a} with the definitions, belong to the bigger class of quasidifferentiable functions for all $k \in \N$.
\end{remark}

Curiously enough, it appears that the class of directed subdifferentiable functions is substantially wider than QD. We next give an example of a directed subdifferentiable function $f:\R^2\to \R$ defined as
$$
   f(x,y) \chR{:=} \inf_{k\in 1: \infty} \left|x-\frac{y}{k}\right|
$$
which is not quasidifferentiable. This particular example is motivated by an idea of J.~C.~H.~Pang and based on the well-known one-dimensional construction discussed in detail in~\cite[Example~3.5]{BaiFarRosh2012a} (see also the references therein).

The idea presented in this example can be seen as a prototype to define
a family of functions that are not quasidifferentiable but directed
subdifferentiable. In this sense it is a universal construction to create
further examples.

To show that this function is Lipschitz, we need a small technical result
which also states the Lipschitz constant. The proof is self-evident, hence, omitted; for a more general result with upper semi-continuous parameters see \cite[Sec.~2.8]{Cla1983}.

\begin{lemma}\label{lem:MinMaxLip} Let $f_i:\R^n\to \R$, $i=1,2$, be globally Lipschitz with a common constant $L$. Then their pointwise maximum and minimum are also Lipschitz with the same constant.
\end{lemma}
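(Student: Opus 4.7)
The plan is to prove the maximum case directly using the standard ``pick the active index'' trick, and then deduce the minimum case from the identity $\min\{a,b\} = -\max\{-a,-b\}$ (noting that negation preserves Lipschitz constants).

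First I would fix arbitrary points $x,y \in \R^n$ and set $f_{\max}(x) = \max\{f_1(x), f_2(x)\}$. Without loss of generality I can assume $f_{\max}(x) \geq f_{\max}(y)$ (otherwise swap the roles of $x$ and $y$). Then there exists an index $i \in \{1,2\}$ with $f_{\max}(x) = f_i(x)$. Since $f_{\max}(y) \geq f_i(y)$ by definition of the pointwise maximum, I obtain the key chain of inequalities
\begin{equation*}
    0 \leq f_{\max}(x) - f_{\max}(y) \leq f_i(x) - f_i(y) \leq |f_i(x) - f_i(y)| \leq L \|x-y\|,
\end{equation*}
which gives $|f_{\max}(x) - f_{\max}(y)| \leq L\|x-y\|$.

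For the minimum case, I would observe that $f_{\min}(z) = -\max\{-f_1(z), -f_2(z)\}$. Since $-f_1$ and $-f_2$ are also Lipschitz with constant $L$, the result for the maximum applied to $-f_1, -f_2$ yields the Lipschitz estimate for $-f_{\min}$, and hence for $f_{\min}$, with the same constant $L$.

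There is no real obstacle here; the argument is essentially a one-line application of the definition of pointwise maximum together with the ability to choose an active index at the point where the max is larger. The only subtlety worth noting is that the WLOG reduction is legitimate because the final bound is symmetric in $x$ and $y$.
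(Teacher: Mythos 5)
Your proof is correct; the active-index argument for the maximum and the reduction $f_{\min}=-\max\{-f_1,-f_2\}$ for the minimum are exactly the standard route. The paper itself omits the proof entirely (declaring it self-evident and pointing to Clarke, Sec.~2.8, for a more general result), so your write-up simply supplies the expected elementary argument and there is nothing to compare beyond that.
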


\begin{exam}\label{ex:NonQD} Let $f:\R^2\to \R$ be defined as follows:
$$
f(x,y) \chR{:=} \inf_{k\in 1: \infty} \left|x-\frac{y}{k}\right|
$$

(i) Observe that for every $k\in N$ the linear function $x-y/k$ is  globally Lipschitz with the constant $L_k \chR{:=} \sqrt{1+1/k^2}\leq \sqrt{2}=:L$.

From Lemma~\ref{lem:MinMaxLip} we deduce that for any $N\in \N$ the function
$$
f_N(x,y) \chR{:=} \min_{k\in 1:N}\left|x-\frac{y}{k}\right|= \min_{k\in 1:N}\max\left\{x-\frac{y}{k}, -x+\frac{y}{k}\right\}
$$
is Lipschitz with constant $L$.

Pick an arbitrary pair of points $x_1, x_2\in \R^2$.
It is not difficult to observe that
$$
f(x,y) = \lim_{N\to \infty} f_N(x,y) \;,
$$
therefore, for every $\varepsilon > 0 $ there exists $M\in \N$ such that
$$
|f(x_1) - f_M(x_1)| < \frac{\varepsilon}{2}\|x_1-x_2\|, \quad |f(x_2) - f_M(x_2)| < \frac{\varepsilon}{2}\|x_1-x_2\| \;.
$$
Hence,
\begin{align*}
|f(x_1) - f(x_2)| & \leq \varepsilon \|x_2-x_1\|+|f_M(x_1) - f_M(x_2)|\leq (L+\varepsilon)\|x_2 - x_1\| \;, \\
|f(x_1) - f(x_2)| & \leq \lim_{\varepsilon \downarrow 0 }(L+\varepsilon)\|x_2 - x_1\| =  L\|x_2 - x_1\| \;,
\end{align*}
and by the arbitrariness of $x_1$ and $x_2$ the function $f$ is globally Lipschitz with constant $L = \sqrt{2}$.

(ii) The function $f$ is positively homogeneous, hence, it is Dini directionally differentiable at zero with
$$
f'(0;l) = f(l)\qquad \forall\, l\in \R^2 \;.
$$
It follows that the function is directionally differentiable in 0 in the sense of \cite[Sec.~I.3]{DemRub1995}
and from the Lipschitzness of $f$ that it is also Hadamard directionally differentiable.

(iii) We next show that even though this function is both Lipschitz and directionally differentiable, it is not quasidifferentiable. Assume the contrary: then there exist two sublinear functions $g,h:\R^2 \to \R$
such that
$$
f'(0;(x,y)) = f(x,y) = g(x,y) - h(x,y)\qquad \forall (x,y) \in \R^2 \;.
$$
Consider the restriction of this identity to the line $\R\times \{1\}$. Then
$$
f(\cdot,1) = \inf_{k\in 1:\infty} \left|\cdot-\frac{1}{k}\right| = g(\cdot, 1) - h(\cdot, 1) \;,
$$
where $g(\cdot, 1)$ and $h(\cdot, 1)$ are convex functions. It is well-known (see the discussion in~\cite{BaiFarRosh2012a} and references therein) that the function $f(\cdot,1)$ is not of bounded variation, hence, can not be represented as a DC function. This contradicts our assumption that $f$ is quasidifferentiable.

(iv) We next show that the function $f$ is $M$-directed subdifferentiable at the origin with $M = \sqrt{2}$. To do that, we state that the directional derivative $\dirDeriv{f}{0}{l}$ of $f(\cdot)$ exists for every $l \in \Stwo$ by~(ii). We need to show that
 \begin{itemize}
   \item[1)] the directional derivative $\dirDeriv{f_l}{0}{\eta}$ of the auxiliary
             function $f_l(\cdot)$ exists for every $\eta = \pm 1$
   \item[2)] the directional derivatives $\dirDeriv{f}{0}{l}$ and
             $\dirDeriv{f_l}{0}{\eta}$ are bounded by $M$ for every $l \in \Stwo$
             and $\eta = \pm 1$, i.e.
              \begin{alignat*}{2}
                 |\dirDeriv{f}{0}{l}| & \leq M \quad(l \in \Stwo) \;, & \quad
                 \max\{|f_l'(0;1)|,|f_l'(0;-1)|\} & \leq M \;.
              \end{alignat*}
 \end{itemize}
We first have a closer look at the function, considering its values on different areas in $\R^2$.
It is not difficult to see that
$$
f(x,y)
= \left\{
\begin{array}{rrl}
|x| \;,                 & xy\leq 0 & \text{(area A)}\\
|x|-|y| \;,             & \frac{x}{y}\geq 1 \;,\ y\neq 0 & \text{(area B)}\\
\frac{|y|}{k}-|x|\;,& \frac{x}{y}\in
     \left[\frac{1}{2}\left(\frac{1}{k}+\frac{1}{k+1}\right), \frac{1}{k}\right] \;, \ k\in \N
                     & \text{(area $C_k$)}\\
|x|- \frac{|y|}{k+1} \;,   & \frac{x}{y}\in
     \left[\frac{1}{k+1}, \frac{1}{2}\left(\frac{1}{k}+\frac{1}{k+1}\right)\right] \;, \ k\in \N
                     & \text{(area $D_k$)}.
\end{array}
\right.
$$
Since $\dirDeriv{f}{0}{l} = f(l)$ by~(iii), the absolute value of the directional
derivative is bounded by $M$.

The areas $A$ and $B$ and the areas $C_k$ and $D_k$ for several initial values of $k$ are shown on  \chR{Fig.}~\ref{fig:AreasForExample}.

\begin{figure}[h]
\centering \includegraphics[width=200pt, keepaspectratio ]{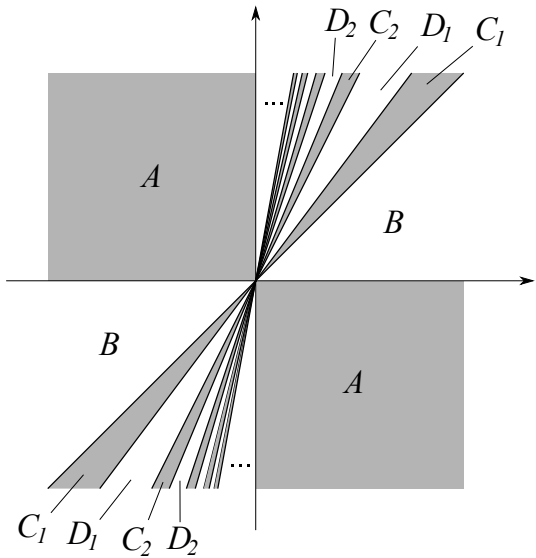}
\caption{Regions on which the function $f$ from Example~\ref{ex:NonQD} is linear} \label{fig:AreasForExample}
\end{figure}
Observe that the function $f$ is piecewise linear in a neighbourhood of every point except for the line $H = \{0\}\times \R$. Therefore, its directional derivative exists and is bounded by the Lipschitz constant of $f$ everywhere except for $H$, and hence the restriction of $f$ to the relevant affine subspaces is also defined and is bounded by the Lipschitz constant everywhere on $\R^2\setminus H$. It remains to show that the restriction $f_l$ of $f$ is directionally differentiable for $l\in \Stwo\cap H$. Observe that $\Stwo\cap H = \{(0,1), (0,-1)\}$. For $l^+ = (0,1)$ and
$l^- = (0,-1)$ we have
$$
f_{l^+}(t) = f(t,1) = \inf_{k\in \N}\left|t - \frac{1}{k}\right|, \quad
f_{l^-}(t) = f(t,-1) = \inf_{k\in \N}\left|t + \frac{1}{k}\right| = f_{l^+}(-t) \;.
$$
It was shown in \cite[Example 3.5]{BaiFarRosh2012a} that the function $f_{l^+}$ is directionally differentiable with $\max_{v\in \{-1,1\}}|f'_{l^+}(0;v)|\leq 1<L$. We have hence shown that the function $f$ is $M$-directed subdifferentiable.
\end{exam}

\begin{remark} Note that the function $f$ in Example~\ref{ex:NonQD} is neither definable nor even tame. To see this, observe that if $f$ would be definable, so would be its restriction $f_{l^+}$ (it can be represented as an orthogonal projection of the intersection of the graph of $f$ with the set $\{y=1\}$). Every definable function must satisfy the monotonicity theorem, in particular, there must exist an $\varepsilon>0$ such that $f_{l^+}$ is strictly monotone on $(0,\varepsilon)$. It is not the case, as $f_{l^+}$ is oscillating in the neighborhood of $x$. This also shows that the function is not tame, as this behavior happens in any bounded box around the origin.
\end{remark}

\section{Conclusions} As seen in Sections~\ref{SecDefDirSubFunc} and \ref{SecClassOfDirSubFunc}, the class of directed
subdifferentiable functions is rather large and is closed under
the arithmetic and boolean operations. The definition is motivated
by the directed subdifferential and includes (but is not limited to) such important subclasses as DC, quasidifferentiable, amenable and
Lipschitz definable functions.

The next steps will be the extension of calculus rules of the directed
subdifferential for directed subdifferentiable functions. We indeed hope
that the exact calculus rules for the directed subdifferential for QD
functions carries over to this bigger class. In this respect we expect
an exact sum rule for two directed subdifferentiable functions to hold without any additional regularity assumptions. Since the calculus relies heavily on directional derivatives in a recursive way, we anticipate easy and exact calculus rules for the directed subdifferential of maximum and minimum of directed subdifferentiable functions.  Based on these calculus rules we expect to elaborate formulas for the directed subdifferential of amenable and
lower-$C^k$ functions as well as optimality conditions, the mean-value
theorem and chain rules in an equality form.

\section*{Acknowledgements}
The research is partially supported by The Hermann Minkowski Center for Geometry at Tel Aviv University, Tel Aviv, Israel and by the University of Ballarat `Self-sustaining Regions Research and Innovation Initiative', an Australian Government Collaborative Research Network (CRN).

We would like to acknowledge the discussions with Jeffrey C.~H.~Pang starting in October 2009 about Lipschitz definable functions. During the communications he suggested a construction to illustrate the existence of Lipschitz functions which are not quasidifferentiable. This motivated us to consider Example~\ref{ex:NonQD} which is based on a well-known one-dimensional example of a function that is quasidifferentiable but not a DC function (see~\cite[Example~3.5]{BaiFarRosh2012a}).
The authors are also grateful to Aris Daniilidis and Dmitriy Drusvyatskiy for a helpful discussion on Lipschitz definable/semi-algebraic functions.

\def\cprime{$'$} \def\cydot{\leavevmode\raise.4ex\hbox{.}}
  \def\cfac#1{\ifmmode\setbox7\hbox{$\accent"5E#1$}\else
  \setbox7\hbox{\accent"5E#1}\penalty 10000\relax\fi\raise 1\ht7
  \hbox{\lower1.15ex\hbox to 1\wd7{\hss\accent"13\hss}}\penalty 10000
  \hskip-1\wd7\penalty 10000\box7} \def\Dbar{\leavevmode\lower.6ex\hbox to
  0pt{\hskip-.23ex \accent"16\hss}D} \def\dbar{\leavevmode\hbox to
  0pt{\hskip.2ex \accent"16\hss}d}

\end{document}